\DeclareMathOperator{\im}{im}
\renewcommand{\leq}{\leqslant}
\renewcommand{\geq}{\geqslant}
\title{Spectral invariants towards a non-convex Aubry Mather theory} 
\author{Nicolas Vichery}
\date{\today}
\newtheorem{thm}{Theorem}[section]
\newtheorem{lemma}[thm]{Lemma}
\newtheorem{prop}[thm]{Proposition}
\newtheorem{coroll}[thm]{Corollary}
\theoremstyle{definition}
\newtheorem{definition}[thm]{Definition}
\newtheorem{conj}[thm]{Conjecture}
\newtheorem*{acknow}{Acknowledgement}
\newtheorem*{thmintro}{Theorem}
\newenvironment{prf}[1][]{\addvspace{8mm} \emph{Proof #1.
    ---~~}}{~~~$\Box$\bigskip}
\newlength{\espaceavantspecialthm}
\newlength{\espaceapresspecialthm}
\newenvironment{rem}[1][]{\refstepcounter{thm} 
\vskip \espaceavantspecialthm \noindent \textsc{Remark~\thethm #1.} }%
{\vskip \espaceapresspecialthm}
\newcommand{\R}{{\mathbb{R}}}
\newcommand{\T}{{\mathbb{T}}}
\newcommand{\Z}{{\mathbb{Z}}}
\newcommand{\cA}{{\mathcal{A}}}
\newcommand{\cG}{{\mathcal{G}}}
\newcommand{\cL}{{\mathcal{L}}}
\newcommand{\cM}{{\mathcal{M}}}
\newcommand{\fc}{{:\ }}
\newcommand{\tb}{\textbf}
\begin{document}

\maketitle

\begin{abstract} 
Aubry-Mather is traditionally concerned with Tonelli Hamiltonian (convex and super-linear). In \cite{Vi,MVZ}, Mather's $\alpha$ function is recovered from the homogenization of symplectic capacities. This allows the authors to extend the Mather functional to non convex cases. This article shows that the relation between invariant measures and the subdifferential of Mather's functional (which is the foundational statement  of Mather) is preserved in the non convex case.

We give applications in the context of the classical KAM theory to the existence of invariant measures with large rotation vector after the possible disappearance of some KAM tori.
\end{abstract}

\section{Introduction}

In all this paper, we assume $M$ to be  a compact finite dimensional connected manifold endowed with an auxiliary Riemannian metric $g$. We consider the cotangent bundle $T^*M$ equipped with the Liouville form $\theta$.

\begin{definition}
A Hamiltonian $H$ is Tonelli if :

\begin{itemize}
\item $H$ is at least $C^2$;
 \item  $H$ is superlinear, meaning that $\lim\limits_{|p|\to \infty}\frac{H(q,p)}{|p|} = \infty$;
 \item  $H$ is strictly convex in the fibers.
\end{itemize}

\end{definition}

Aubry-Mather theory deals with a more general situation than KAM theory. This last one provide the existence of invariant tori after small perturbation of integrable system with large degree of smoothness. However, invariants tori are destroyed far from the integrable case. Aubry-Mather theory tries to give information about invariant sets and measures in this context.

Traditionally, Tonelli Hamiltonians are the only ones treated in Aubry-Mather theory. In this case the study of the Hamiltonian dynamics can be bring back to a minimization problem thanks to the duality of Legendre transform. These hypothesis are also related to the specific form of the Lagrangians considered in physics.

For non-convex Hamiltonians, it is classical in symplectic geometry to introduce action integrals and study variational problems. Generally, these integrals do not present any minimum or maximum. That is the main reason why one have to consider minmax technics.

Given a Tonelli Hamiltonian $H$,  Mather associates a functional $\alpha_H \fc H^1(M)\to \R$ defined by:

\[\alpha_H([\eta]):=-\min \limits_{\mu\in \mathcal{M}_{inv}}(\int_{TM} L-\eta d\mu )\]

\noindent where $L$ is the Legendre dual of $H$ and $\mathcal{M}_{inv}$ is the set of  probability measure on $TM$ invariant by the Euler-Lagrange flow. This functional seems to encapsulate many information on the dynamics \cite{Mather_action_minimizing}.

Attempt to generalize Aubry-Mather theory to the non convex case exists in the case of $T^*\mathbb{T}^n$. Cagnetti, Gomes and Tran in \cite{Cagn} study the existence of invariant measures for non-convex autonomous Hamiltonians generalizing Mather measures. An extension of Mather $\alpha$ functional for non convex function has been given by the process of symplectic homogenization in \cite{Vi} and generalized for all cotangent bundle in \cite{MVZ}.

Mather fundamental results are the following (restated in Hamiltonian form):

\begin{thm}\cite{Mather_action_minimizing}
 Let $H$ a Tonelli Hamiltonian. For all $\xi\in\alpha_H(c)$, there exists an invariant measure $m$ on $T^*M$ with rotation vector $\xi$ and action $\alpha(c)-\langle \xi,c\rangle$.
\end{thm}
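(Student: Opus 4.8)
The plan is to read the hypothesis ``$\xi \in \alpha_H(c)$'' as $\xi$ belonging to the convex subdifferential $\partial \alpha_H(c)$ at $c \in H^1(M;\R)$ (as the abstract's phrasing suggests); this set is non-empty because $\alpha_H$ is finite and convex on the finite-dimensional space $H^1(M;\R)$, hence locally Lipschitz. I would route the proof through Mather's $\beta$-function
\[
\beta_H(h) := \min\Big\{ \int_{TM} L\, d\mu \ : \ \mu \in \mathcal{M}_{inv},\ \rho(\mu) = h \Big\},\qquad h \in H_1(M;\R),
\]
with $\rho(\mu)$ the rotation vector of $\mu$, characterized by $\langle [\lambda], \rho(\mu)\rangle = \int_{TM}\lambda_q(v)\,d\mu$ for closed $1$-forms $\lambda$. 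The measure-theoretic formula for $\alpha_H$ already displays $-\alpha_H$ as an infimum over $\mu$ of the affine-in-$c$ functions $c \mapsto \int(L - \eta_c)\,d\mu = \int L\,d\mu - \langle c, \rho(\mu)\rangle$, so $\alpha_H$ is convex; computing $\alpha_H^*(\xi) = \sup_c\inf_\mu\{\langle \xi - \rho(\mu), c\rangle + \int L\,d\mu\}$ and exchanging $\sup_c$ and $\inf_\mu$ (a Fenchel--Rockafellar / minimax step, see below) yields $\alpha_H^* = \beta_H$, hence $\alpha_H = \beta_H^*$. Granting this, $\xi \in \partial\alpha_H(c)$ is equivalent to the Fenchel equality $\alpha_H(c) + \beta_H(\xi) = \langle \xi, c\rangle$, i.e. $\beta_H(\xi) = \langle \xi, c\rangle - \alpha_H(c)$.

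Next I would produce the measure. Since $L$ is Tonelli, and in particular superlinear in the fibers, $\beta_H(\xi)$ is finite and its defining infimum is attained: any minimizing sequence $\mu_n$ has $\int_{TM}|v|\,d\mu_n$ uniformly bounded (compare the superlinear lower bound on $L$ with $\int L\,d\mu_n \le \beta_H(\xi)+1$), hence is tight, so after passing to a subsequence $\mu_n \rightharpoonup \mu_0$ with $\mu_0 \in \mathcal{M}_{inv}$, $\rho(\mu_0) = \xi$ (the rotation vector is continuous on sets where $\int|v|\,d\mu$ is bounded) and $\int L\,d\mu_0 \le \liminf \int L\,d\mu_n = \beta_H(\xi)$ by lower semicontinuity; thus $\int_{TM} L\,d\mu_0 = \beta_H(\xi)$. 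Now transport to $T^*M$: the Legendre diffeomorphism $\mathcal{L}_H : TM \to T^*M$, $(q,v)\mapsto (q,\partial_v L(q,v))$, conjugates the Euler--Lagrange flow to the Hamiltonian flow of $H$, so $m := (\mathcal{L}_H)_*\mu_0$ is an $X_H$-invariant probability measure; since $\mathcal{L}_H$ covers $\mathrm{id}_M$, the rotation vector is unchanged, $\rho(m) = \xi$. With the sign normalization of the definition of $\alpha_H$, the action of an invariant measure is $-\int_{TM} L\,d\mu$, equal on the Hamiltonian side to $\int_{T^*M}(H - \theta(X_H))\,dm$ via the pointwise Legendre identity $L = \langle p,v\rangle - H = \theta(X_H) - H$ under $\mathcal{L}_H$; hence the action of $m$ equals $-\beta_H(\xi) = \alpha_H(c) - \langle \xi, c\rangle$, which is the assertion.

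The part I expect to fight with is not the transport in the last step (bookkeeping) but (i) the compactness and attainment on the non-compact fibers of $TM$, which is exactly what Tonelli superlinearity is for, and, more essentially, (ii) the duality $\beta_H = \alpha_H^*$: the $\sup$--$\inf$ exchange is over the non-compact vector space $H^1(M;\R)$, so one cannot simply quote Sion's theorem and must argue via Fenchel--Rockafellar, or reprove it by hand. If instead one tries to stay entirely within the measure formula, differentiating $t \mapsto -\alpha_H(c+tc')$ and using the envelope inequality shows only that the rotation vector of \emph{some} minimizer of $\mu \mapsto \int(L-\eta_c)\,d\mu$ lies in $\partial\alpha_H(c)$; upgrading this to ``every $\xi \in \partial\alpha_H(c)$ is realized'' requires knowing that the weak-$*$ compact convex set of such minimizers maps \emph{onto} all of $\partial\alpha_H(c)$ under the affine map $\mu \mapsto \rho(\mu)$, which is once more the $\alpha$--$\beta$ conjugacy together with attainment in $\beta_H$.
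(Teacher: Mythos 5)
The paper does not prove this statement: it is quoted verbatim as Mather's theorem from \cite{Mather_action_minimizing} (with the typo $\xi\in\alpha_H(c)$ for $\xi\in\partial\alpha_H(c)$, which you correctly repaired), so there is no in-paper argument to compare against. Your proposal is essentially Mather's own route --- the Fenchel conjugacy between $\alpha_H$ and the minimal-action function $\beta_H$, attainment of the minimum defining $\beta_H(\xi)$ via superlinearity, and transport by the Legendre diffeomorphism --- and it is correct in outline. Two refinements: (i) the $\sup$--$\inf$ exchange you flag as the delicate point can be avoided entirely, since $\alpha_H(c)=\sup_\mu\{\langle c,\rho(\mu)\rangle-\int L\,d\mu\}$ splits as $\sup_h\sup_{\rho(\mu)=h}$ to give $\alpha_H=\beta_H^*$ directly; one then only needs $\beta_H$ convex (clear, since $\mathcal{M}_{inv}$ is convex and $\rho$ affine) and lower semicontinuous (the same tightness argument as attainment) to get $\alpha_H^*=\beta_H^{**}=\beta_H$ and hence the Fenchel equality characterizing $\partial\alpha_H(c)$; (ii) passing $\rho(\mu_n)\to\rho(\mu_0)$ needs uniform integrability of $|v|$, not merely a bounded first moment, since $\lambda_q(v)$ is unbounded --- but superlinearity gives a uniform bound on $\int\ell(|v|)\,d\mu_n$ for a superlinear $\ell$, so de la Vall\'ee Poussin closes this. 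With those two points made explicit the argument is complete.
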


He deduces using the  convexity of $\alpha$ that all rotation vector can be reach.

We generalize this statement in the present article to the non-convex case for all cotangent bundle Thm. \ref{main_thm} and Thm. \ref{nonautonomous}.

\begin{thmintro}[\ref{nonautonomous}]
Let $H\in C^2(T^*M\times\mathbb{S}^1)$ with geometrically bounded flow. Then, for all $\eta\in \partial \alpha_H(\hat\lambda)$, the exists $m$ a $\phi^1_H$ invariant measure such that $\cA(m)=\alpha_H(\hat\lambda)-\langle\eta, \hat\lambda\rangle$ and $\rho(m)=\eta$.
\end{thmintro}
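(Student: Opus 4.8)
The plan is to obtain the invariant measure as a limit of "approximate minimizers" coming from the spectral invariant construction that defines $\alpha_H$, and then identify its rotation vector using the subdifferential hypothesis. Recall that $\alpha_H$ is built, via symplectic homogenization, from the asymptotics of spectral invariants $c(\mathbf{1}, H_N)$ of iterates (or of $H$ composed with large-slope linear deformations on the base $\mathbb{T}^k\subset M$, in the cotangent-bundle set-up of \cite{MVZ}). Each spectral invariant $c(\mathbf{1}, H')$ is, by the standard variational/Floer-theoretic characterization, realized by a $1$-periodic orbit (or a sequence of near-periodic orbits when the spectral value is not attained by a nondegenerate generator) whose action equals that critical value. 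So the first step is: for each $N$, extract from the defining minmax a point $\gamma_N$ in $T^*M\times\mathbb{S}^1$ lying on (or $\eps_N$-close to) a $1$-periodic orbit of the relevant deformed Hamiltonian, with action $\simeq \alpha_H(\hat\lambda)$ up to lower-order terms, and push forward the normalized occupation measure of the associated orbit to get a probability measure $m_N$ on $T^*M\times\mathbb{S}^1$ that is $\phi^1_H$-invariant in the limit.

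Second, I would use the geometric boundedness of the flow to get tightness: the orbits produced stay in a fixed compact set (this is exactly what "geometrically bounded flow" is designed to give — a priori $C^0$ bounds on the orbits carrying the spectral invariants), so $\{m_N\}$ is weak-$*$ precompact and any limit $m$ is a genuine $\phi^1_H$-invariant Borel probability measure. Then I compute the two quantities attached to $m$. The action $\cA(m)$ is the limit of the actions of the $m_N$, which by construction converge to $\alpha_H(\hat\lambda)$ corrected by the linear term, i.e. $\alpha_H(\hat\lambda)-\langle \eta,\hat\lambda\rangle$; this is where one has to be careful to separate the "$\int \theta$" part of the action from the Hamiltonian part and to track how the homogenization normalization enters. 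The rotation vector $\rho(m_N)$ is, essentially by definition, the slope of the linear deformation one used; and choosing that slope to be $\eta$ is legitimate precisely because the subdifferential inequality $\alpha_H(\hat\lambda') \geq \alpha_H(\hat\lambda) + \langle \eta, \hat\lambda'-\hat\lambda\rangle$ for $\eta\in\partial\alpha_H(\hat\lambda)$ forces the minmax defining $\alpha_H$ at $\hat\lambda$ to be "supported" in the direction $\eta$ — so the orbits realizing it have average rotation $\eta$. Passing to the limit, $\rho(m)=\eta$.

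Concretely the argument has the shape: (i) reduce Thm.~\ref{nonautonomous} to the autonomous statement Thm.~\ref{main_thm} by the usual suspension trick $T^*M\times\mathbb{S}^1 \rightsquigarrow T^*(M\times\mathbb{S}^1)$, using that a geometrically bounded time-periodic flow suspends to a geometrically bounded autonomous one; (ii) write $\alpha_H(\hat\lambda)$ as the homogenized spectral invariant and pick, for a perturbation $\hat\lambda'$ in a neighbourhood realizing the supporting hyperplane with normal $\eta$, generators $\gamma_N$ of the spectral invariants with the convexity/monotonicity properties of $c(\mathbf 1,\cdot)$ controlling their actions; (iii) form $m_N$, extract a weak-$*$ limit $m$, check invariance; (iv) evaluate $\cA(m)$ and $\rho(m)$ by continuity of these functionals on the (tight, hence uniformly supported) family $\{m_N\}$, using the subdifferential inequality to pin down $\rho(m)=\eta$ and the normalization of $\alpha_H$ to pin down $\cA(m)=\alpha_H(\hat\lambda)-\langle\eta,\hat\lambda\rangle$.

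The main obstacle I expect is step (ii)–(iii): making rigorous the passage from the minmax value (a single real number obtained by a minimax over an infinite-dimensional space, or over generating functions) to an \emph{orbit} realizing it with a controlled rotation vector, and doing so uniformly in $N$ so that the occupation measures survive the limit. In the convex Tonelli case Mather has honest minimizers and compactness from superlinearity; here one only has minmax critical points, which may be degenerate and may split into several orbits, so one must argue that the spectral value is a \emph{legitimate} critical value carried by an actual $1$-periodic orbit (or a convergent sequence thereof), and that the $\theta$-integral along it is forced — via the supporting-hyperplane inequality for $\alpha_H$ — to have average exactly $\eta$ rather than some other vector in the same face. This is where the "geometrically bounded" hypothesis is essential (to prevent orbits escaping to infinity in the fiber where the spectral value could fail to be attained), and where the bulk of the technical work lies; the rest is weak-$*$ compactness and continuity of $\cA$ and $\rho$, which are routine once the orbits are trapped in a fixed compact set.
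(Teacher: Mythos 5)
Your overall shape (occupation measures along orbits realizing the spectral invariants, weak-$*$ compactness from geometric boundedness, continuity of $\cA$ and $\rho$ on a tight family) matches the paper's strategy, but the step you yourself flag as the main obstacle --- pinning down the rotation vector --- is resolved by a mechanism you do not have, and the one you propose is based on a false premise. You invoke the inequality $\alpha_H(\hat\lambda')\geq\alpha_H(\hat\lambda)+\langle\eta,\hat\lambda'-\hat\lambda\rangle$ for $\eta\in\partial\alpha_H(\hat\lambda)$; but $\alpha_H$ is \emph{not} convex here (that is the whole point of the paper), and $\partial$ is the homological (or approximate) subdifferential, for which no supporting-hyperplane inequality holds. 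The paper's actual mechanism is a two-step chain: axiom (II) (upper semicontinuity of $\partial$ under $C^0$ convergence) transfers $\eta$ to a sequence $\eta_k\in\partial f_k(\lambda_k)$ with $\lambda_k\to\lambda$, $\eta_k\to\eta$, where $f_k$ is the finite-time normalized spectral invariant; then axiom (I) together with Proposition \ref{inclusion} (an envelope-theorem type estimate, $\partial_c f_k(\lambda_k)\subset co\{d_\lambda\tilde S_k \text{ at critical points on the critical level}\}$) writes $\eta_k$ as a convex combination of $d_\lambda\tilde S_k(x_{k,j},\lambda_k,\xi_{k,j})$, and each such derivative is literally the rotation vector $\int_{\gamma_{k,j}}\pi^*\hat\lambda(X_H)$ of a specific Hamiltonian chord of length $k$ with ends on $graph(\hat\lambda)$. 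That is how orbits with prescribed average rotation are produced; nothing in your "the minmax is supported in the direction $\eta$" sketch substitutes for this first-variation computation, and without it the argument does not close.

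Two further discrepancies. First, the objects carrying the spectral invariants are long Hamiltonian \emph{chords} from $graph(\hat\lambda)$ to itself (Lagrangian spectral invariants of $\phi^k(M)$), not $1$-periodic orbits of $c(\mathbf 1,\cdot)$; the occupation measures are normalized pushforwards of Lebesgue measure on $[0,k]$ along these chords, and their limits are invariant by Lemma \ref{adiabatic}. Second, your reduction (i) by suspension to $T^*(M\times\mathbb{S}^1)$ is not what the paper does and leaves a gap of its own: even granting it, the autonomous theorem would produce a measure invariant under the suspended flow on the larger space, and you would still need to descend to a $\phi^1_H$-invariant measure on $T^*M$ with the correct action and rotation vector. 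The paper handles exactly this descent via Polterovich's trick (the appendix): the finite-time construction yields a measure $\nu$ on $T^*M\times\mathbb{S}^1$ invariant under $g_t(x,s)=(\phi_{t+s}\phi_s^{-1}(x),s+t)$, and the conjugation $\tilde g_t=AR_tA^{-1}$ identifies such measures with $m\otimes ds$ for $m$ a $\phi^1_H$-invariant measure on $T^*M$. This disintegration step is not routine and is absent from your proposal.
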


 We must mention that in an article in preparation, Viterbo gives another proof of this in the case of the torus \cite{Vitnonconvex}. He also obtains invariant measures from Lagrangian intersection problem but with a different class of Lagrangians of the torus than those of the present work. It seems difficult for other class of manifolds to generalize his construction.

A second fundamental theorem of Mather says that these invariant measures are supported on a Lipschitz graph. It is expected that in the non convex case, this cannot occur. Nevertheless, we can ask if the measure constructed in the present article can be supported on a $C^0$ Lagrangian into the cotangent bundle in the sense of  \cite{Humi}.

The generalization of the methods of Theorem \ref{nonautonomous} allows us to formulate statement about the localization of invariant measures. This is the content of Theorem \ref{local}.

\begin{acknow}
I would like to warmly thank  Vincent Humili\`ere for useful comments on a preliminary version of this work. I am also grateful to Claude Viterbo for asking me if it was possible to generalize Mather's theorem about existence of invariant measures to the non-convex case.

The research leading to these results has received fundings from the European Community's Seventh Framework Programme ([FP7/2007-2013][FP7/2007-2011]) under grant agreement $n^\circ$[258204] and from the ANR Weak KAM Beyond Hamilton-Jacobi (ANR-12-BS01-0020).
\end{acknow}

\section{Review of symplectic homogenization}
\subsection{Spectral invariants}

Given two Lagrangians $L_1$ and $L_2$ isotopic to the zero section, we can associate to them different (cohomology)groups using Floer theory, generating functions or microlocal analysis of sheaves.

All this group are known to be isomorphic. We refer to \cite{Milinkovic_Oh_gf_versus_action} and \cite{Viterbo_functor} for the isomorphism between Floer group and generating function homology and to \cite{HDC} for the construction of sheaves that generalizes the framework of generating functions.

Moreover, these groups are naturally filtered by $\R$ called ``action'' variable. Therefore, it is natural to consider the values of the action where some elements of the group disappear. That is the notion of spectral invariant defined by Viterbo in the context of generating function and Schwartz and Oh for Floer homology. They can be seen as "homologically visible" critical values. They do not depends on the technic we use since the previous isomorphisms respect the $\R$-filtration.

For simplicity, we will use generating function technics.

\begin{rem}
In order to treat other symplectic manifolds, we should have used Floer homology, which has been so far the only technical way to deal with general symplectic manifold. Nevertheless, to follow the same path than the present paper, we would need in the largest class of generality a well developed version of family Floer homology.
\end{rem}

\subsubsection{Definition}\label{section_definition_Lagr_sp_invts_gf}

A generating function quadratic at infinity, or gfqi is a function $S \fc M(q) \times E(\xi) \to \R$, with  $E$ a vector space of finite dimension, such that $\|\partial_\xi S - \partial_\xi B\|_{C^0}$ is bounded and $B \fc E \to \R$ is a non degenerate quadratic form. We denote $E = E^+ \oplus E^-$ the decomposition in positive and negative eigenspaces of $B$.

We consider the relative homology $H_*(\{S<a\},\{S<b\})$. For $a$ large and $b$ small enough this group is independant of $(a,b)$ and is canonically isomorphic to $H_*(M) \otimes H_*(E^-,E^--0) \simeq H_{*+d}(M)$, with $d = \dim E^-$ where the last isomorphism ("Thom's isomorphism" ) is given by tensorising  with the generator of $H_d(E^-,E^--0) \simeq \Z_2$. We denote this group by $H_*(S:M)$. 

There exists a natural morphism induced by the inclusion $i^a\fc H_*(\{S < a\},\{S < b\}) \to H_*(S:M)$, b small enough. To every $\alpha\neq 0 \in H_*(M)$ we can associate the spectral invariant:
\[l(\alpha,S) = \inf \{a\,|\,\alpha\in\im i^a\}\ .\]

These invariants are defined for Lagrangian submanifolds $T^*M$. Indeed a gfqi gives  rise to Lagrangian immersion \cite{Vit_gfqi} as follows.

\begin{definition}
Let $(q,\xi)$ be the coordinates on $M\times\mathbb{R}^k$ and $S\fc M\times\mathbb{R}^k \rightarrow \mathbb{R}$ a smooth function, $S$ is a generating function for $L$ if:

\begin{enumerate}
 \item The map $(q,\xi)\mapsto \partial_\xi S(q,\xi)$ has 0 as regular value.
 \item The manifold $\Sigma_S=\{(q,\xi) | \partial_\xi S(q,\xi)=0\}$ is compact in $M\times\mathbb{R}^k$ and
 \begin{align*}
            i_S: \Sigma_S & \rightarrow T^*M \\
           (q,\xi) & \mapsto(q,\partial_q S(q,\xi))
 \end{align*} 
 has $i_S(\Sigma_S)=L$ as image.
\end{enumerate}
\end{definition}

By a Theorem of Viterbo and Th\'eret, for all Lagrangian hamiltonialy isotopic to the zero section there exists a ``essentialy unique'' generating function. The existence was already proved by Chaperon Laudenbach and Sikorav.

\begin{thm}[Laudenbach-Sikorav]
  If $L$ is a Lagrangian generated by a gfqi and $\phi$ is a Hamiltonian diffeomorphim with compact support then $\phi(L)$ can be generated by a gfqi.
\end{thm}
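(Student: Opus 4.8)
The plan is to prove the existence statement by a \emph{gluing} (composition) construction for generating functions, after a subdivision trick that reduces everything to the case of a Hamiltonian diffeomorphism $C^1$-close to the identity. Write $\phi=\phi^1$ as the time-one map of a compactly supported Hamiltonian isotopy $(\phi^t)_{t\in[0,1]}$ and choose a subdivision $0=t_0<\dots<t_N=1$ fine enough that each $\psi_i:=\phi^{t_i}\circ(\phi^{t_{i-1}})^{-1}$ is compactly supported and $C^1$-close to $\Id$. Since $\phi=\psi_N\circ\cdots\circ\psi_1$, it suffices to establish the one-step statement: if $L$ admits a gfqi and $\psi$ is compactly supported and $C^1$-close to $\Id$, then $\psi(L)$ admits a gfqi; the theorem then follows by iterating $N$ times, starting from the given gfqi for $L$. (Equivalently one may package all the steps at once via Chaperon's broken-trajectory functional, whose fiber variables are the intermediate positions $q_0,\dots,q_{N-1}$ together with the gfqi parameters of $L$ and whose base variable is the endpoint $q_N$; its fiber-critical points are exactly the broken trajectories landing on $\phi(L)$.)

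For the one-step statement I first produce a generating function for $\psi$. The graph $\Gamma_\psi=\{(x,\psi(x)):x\in T^*M\}\subset\ol{T^*M}\times T^*M$ becomes, under the standard identification $\ol{T^*M}\times T^*M\cong T^*(M\times M)$ (flipping the sign of the covector on the first factor), a Lagrangian $C^1$-close to the conormal bundle $N^*\Delta$ of the diagonal $\Delta\cong M$, since the graph of $\Id$ is exactly $N^*\Delta=\{(q,q;-p,p)\}$. Choosing a Weinstein neighborhood of $\Delta$ — concretely, embedding $M\hookrightarrow\R^N$ and working in a tubular neighborhood of the diagonal in $M\times M$ — exhibits $\Gamma_\psi$ as the graph of an exact $1$-form over a neighborhood of the zero section, hence as the Lagrangian generated in the classical sense by a single function $G_\psi$; the extra variables of $G_\psi$, beyond a base point, are the normal-to-diagonal directions and the normal directions of $M$ in $\R^N$, and in those variables $G_\psi$ is arranged to agree with a fixed nondegenerate quadratic form outside a compact set (this is where the compact support of $\psi$ is used). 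In particular $G_\psi$ is quadratic at infinity.

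Finally I glue: if $L$ is generated by the gfqi $S\fc M(q_1)\times E(\xi)\to\R$ and $\psi$ by $G_\psi$ as above, then
\[
\widetilde S(q_2\,;\,\xi,q_1,v):=S(q_1,\xi)+G_\psi(q_1,q_2,v),
\]
with base variable $q_2$ and fiber variables $(\xi,q_1,v)$ (the intermediate position $q_1$ being transported into Euclidean space via the embedding, which is what turns $(\xi,q_1,v)$ into a genuine finite-dimensional vector space of parameters), generates $\psi(L)$: the fiber-critical equations $\partial_\xi\widetilde S=\partial_{q_1}\widetilde S=\partial_v\widetilde S=0$ say precisely that $(q_1,\partial_{q_1}S)\in L$ and that $\psi$ sends this point to $(q_2,\partial_{q_2}\widetilde S)$, while the regular-value and compactness requirements for $\widetilde S$ follow from those for $S$ and $G_\psi$ together with $\psi$ being a diffeomorphism. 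The main obstacle — everything else being bookkeeping — is to verify that $\widetilde S$ is again \emph{quadratic at infinity}, i.e.\ that there is a nondegenerate quadratic form $\widetilde B$ on the parameter space with $\|\partial_{(\xi,q_1,v)}\widetilde S-\partial_{(\xi,q_1,v)}\widetilde B\|_{C^0}<\infty$. Taking $\widetilde B=B\oplus(\text{quadratic part of }G_\psi)$, one checks this by splitting the parameter space: outside a compact set of parameters, $S$ differs from $B$ by a $C^1$-bounded term (as $S$ is a gfqi) while $G_\psi$ coincides with its model quadratic part (as $\psi$ has compact support and $M$ is compact), so the estimate for $\widetilde S$ reduces to the one already known for $S$. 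Iterating the gluing over $\psi_1,\dots,\psi_N$ then yields a gfqi for $\phi(L)$, which is the assertion. (The ``essentially unique'' refinement quoted before the theorem is not needed here and requires the separate Viterbo--Th\'eret uniqueness argument.)
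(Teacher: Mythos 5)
The paper offers no proof of this statement: it is quoted as a classical existence theorem (Chaperon, Laudenbach--Sikorav, Sikorav), so there is nothing internal to compare your argument against. What you have written is essentially the original broken-trajectory proof: subdivide the isotopy into $C^1$-small compactly supported steps, generate each step by a function quadratic at infinity obtained from a Weinstein neighbourhood of the conormal of the diagonal (after an embedding $M\hookrightarrow\R^N$), and compose generating functions by pushing the intermediate base point into the fibre. The composition formula, the identification of the fibre-critical set with broken trajectories ending on $\phi(L)$, and the reduction of the quadratic-at-infinity estimate for $\widetilde S$ to the one for $S$ are all correct. The step you compress the most is the production of $G_\psi$ itself: a primitive $g$ of the exact $1$-form on $N^*\Delta$ whose graph is $\Gamma_\psi$ generates $\Gamma_\psi$ only over the base $N^*\Delta$, not over $M\times M$; to obtain a genuine gfqi over $M\times M$ one must further compose with a generating function for the Weinstein chart (equivalently, for $N^*\Delta$ itself), which is exactly what introduces the normal-to-diagonal and normal-to-$M$-in-$\R^N$ variables you list, and one must then check that the model quadratic form (of the type $\langle p,q_1\rangle$ in the Euclidean picture) is nondegenerate on the fibre directions so that $\widetilde B$ is admissible. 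You gesture at both points, so I would call this a correct proof in outline rather than one with a gap, but those normal forms are where the real work of the classical proof lives and a careful write-up would have to display them.
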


The ``essential uniqueness '' is only true according to an equivalence relation which is up to fibered diffeomorphism and addition of a constant \cite{Vit_gfqi}.

 We can associate spectral invariants to Lagrangian submanifolds hamiltonialy isotopic to the zero section because of the following remark. According to \cite{Milinkovic_Oh_gf_versus_action}, it is possible to normalize generating function so that: 
 
 \[S(x,\xi)=\int_\gamma -\theta+H_t\]
 
 \noindent with $\gamma(1)=(x,\frac {\partial S}{\partial x}(x,\xi))$, $\dot \gamma=X_H$ and $\frac{\partial S}{\partial \xi}(x,\xi)=0$.

\begin{rem}
Any non degenerate quadratic form is a gfqi for the zero section.
\end{rem}

%
%

\subsection{Review of symplectic homegenization} \label{mainresult}

It has been first defined by Viterbo on the cotangent bundle of $\T^n$,  by some process close to homogenization in the field of partial differential equations \cite{LPV}. Then the construction has been generalized to every cotangent bundle \cite{MVZ}. In this context, the use of the Lagrangian Floer homology has given insights to prove symplectic properties as well as relation to Mather's $\alpha$ functional.
Let us denote by $\cG$, the group of Hamiltonian diffeomorphisms with compact support.

\subsubsection{Construction} 

\begin{definition}\cite{MVZ}
Let $a=[\beta]\in H^1(M)$. then we define $\mu_a:\cG\rightarrow\R$ by:
\[\mu_0(\phi):=\lim\limits_{k\rightarrow\infty}\frac{l_+(\phi^k)}{k}\]
If $\phi=\phi_H$, and $K_\alpha=H(x,p-\beta(x))$ then :
\[\mu_a(\phi):=\mu_0(\phi_K)\]
\end{definition}

This definition does not depend on the choice of a representative for $\beta$ \cite{MVZ}.

\subsubsection{Property}

In this section we present the main property of that functional $\mu_a$ :

\begin{thm}\label{MVZmain}\cite{MVZ} Let $M$ a connected manifold. Then for  all $a \in H^1(M;\R)$, the function $\mu_a \fc \cG \to \R$ satisfies :

\begin{enumerate}
\item $\mu_a(\phi^k)=k\mu_a(\phi)$ for $k \geq 0$ an integer;
\item $\mu_a$ is invariant by conjugaison in $\cG$;
\item If $\phi,\psi\in\cG$ are generated by Hamiltonians $H,G$. Then
\[\int_0^1\min(H_t-G_t)\,dt \leq \mu_a(\phi)-\mu_a(\psi) \leq \int_0^1\max(H_t-G_t)\,dt\,;\]
in particular $\mu_a$ is  Lipschitz according to the Hofer metric.
\item The restriction of $\mu_a$ to the subgroup of $\mathcal{G}$  of diffeomorphism generated by the Hamiltonians supported on a displaceable open set $U$ is zero;
\end{enumerate}

\end{thm}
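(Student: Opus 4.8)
The plan is to obtain all four items as formal consequences of the standard axioms for Viterbo's spectral invariant $l_+\fc\cG\to\R$ built from the gfqi of Section~\ref{section_definition_Lagr_sp_invts_gf}, and then to transfer the statements from $\mu_0$ to $\mu_a$ through the substitution $\phi_H\mapsto\phi_K$, where $K(t,x,p)=H(t,x,p-\beta(x))$ and $\phi_K=\tau_\beta^{-1}\phi_H\tau_\beta$ for the fibrewise shear $\tau_\beta\fc(x,p)\mapsto(x,p+\beta(x))$. The axioms I would invoke for $l_+$ are: (a) the triangle inequality $l_+(\phi\psi)\le l_+(\phi)+l_+(\psi)$ and $l_+(\Id)=0$; (b) the energy estimate $\int_0^1\min_x(H_t-G_t)\,dt\le l_+(\phi_H)-l_+(\phi_G)\le\int_0^1\max_x(H_t-G_t)\,dt$, together with the fact that the $m$-th iterate $\phi_H^m$ is generated by a Hamiltonian $H^{(m)}$ with $\int_0^1\max_x H^{(m)}_t\,dt=m\int_0^1\max_x H_t\,dt$ (and likewise for $\min$); (c) conjugation invariance, $l_+(\psi\phi\psi^{-1})=l_+(\phi)$ for $\psi\in\cG$; (d) the displacement bound $|l_+(\phi)|\le e(U)$ whenever $\supp\phi\subset U$, where $e(U)\in[0,\infty]$ is the displacement energy of $U$.

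First I would check that the limit defining $\mu_0$ exists and is finite: by (a) the sequence $k\mapsto l_+(\phi^k)$ is subadditive, so Fekete's lemma gives $\mu_0(\phi)=\lim_k l_+(\phi^k)/k=\inf_k l_+(\phi^k)/k$, while (b) compared with the identity gives $l_+(\phi^k)\ge k\int_0^1\min_x H_t\,dt$, so the limit lies in $\R$. Item (1) for $\mu_0$ is then immediate (writing $m$ for the exponent to avoid clashing with the limit variable): $\mu_0(\phi^m)=\lim_k l_+(\phi^{mk})/k=m\lim_k l_+(\phi^{mk})/(mk)=m\,\mu_0(\phi)$ for integer $m\ge 0$, with $m=0$ giving $\mu_0(\Id)=0$. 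Item (2) for $\mu_0$ follows from (c): $l_+(\psi\phi^k\psi^{-1})=l_+(\phi^k)$, hence $\mu_0(\psi\phi\psi^{-1})=\mu_0(\phi)$. For item (3) I would apply (b) to $\phi^k=\phi_{H^{(k)}}$ and $\psi^k=\phi_{G^{(k)}}$ and divide by $k$:
\[
\int_0^1\min_x(H_t-G_t)\,dt\ \le\ \frac{l_+(\phi^k)-l_+(\psi^k)}{k}\ \le\ \int_0^1\max_x(H_t-G_t)\,dt\ ,
\]
then let $k\to\infty$; the Hofer--Lipschitz assertion then comes from taking the infimum of $\int_0^1\big(\max_x(H_t-G_t)-\min_x(H_t-G_t)\big)\,dt$ over all Hamiltonians generating $\phi$ and $\psi$. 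For item (4), if $\supp\phi\subset U$ with $U$ displaceable then $\supp\phi^k\subset U$ for every $k$, so (d) gives $|l_+(\phi^k)|\le e(U)<\infty$ and dividing by $k$ forces $\mu_0(\phi)=0$.

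To transfer to $\mu_a$, note that conjugation by the diffeomorphism $\tau_\beta$ is a group automorphism of the symplectomorphism group that carries $\cG$ into itself, since it sends a compactly supported Hamiltonian isotopy to a compactly supported Hamiltonian isotopy, and it acts on Hamiltonians by precomposition with a diffeomorphism (hence preserves their fibrewise sup and inf). Therefore $(\phi^m)_K=(\phi_K)^m$ and $(\psi\phi\psi^{-1})_K=\tilde\psi\,\phi_K\,\tilde\psi^{-1}$ with $\tilde\psi=\tau_\beta^{-1}\psi\tau_\beta\in\cG$, so items (1) and (2) for $\mu_a$ reduce to the same items for $\mu_0$. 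Since $(x,p)\mapsto(x,p-\beta(x))$ is a bijection of each fibre, $\max_x(K_H-K_G)(t,\cdot)=\max_x(H-G)(t,\cdot)$ and likewise for $\min$, so item (3) for $\mu_a$ is item (3) for $\mu_0$ applied to $\phi_{K_H}$ and $\phi_{K_G}$. And if $H_t$ is supported in a displaceable set $U$ then $K_H$ is supported in $\tau_\beta(U)$, which is again displaceable, so $\mu_a(\phi)=\mu_0(\phi_K)=0$, giving item (4). (Well-definedness of $\mu_a$, i.e.\ independence of the representative $\beta$ of $a$, is the remark already recorded after the definition.)

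The main obstacle is not in the bookkeeping above but in the inputs (a)--(d): items (1)--(3) are essentially formal, and the genuine symplectic content is concentrated in (d) --- the displacement-energy estimate for $l_+$ --- which is what makes item (4) work. The one further point requiring care is the transfer step, where $\tau_\beta$ is not compactly supported, so one must verify that conjugation by it really is compatible with the whole spectral-invariant construction; this is true because $\tau_\beta$ is a genuine diffeomorphism of $T^*M$, and conjugation by a diffeomorphism preserves the class of compactly supported Hamiltonian flows and leaves the sup and inf of the generating Hamiltonians unchanged, so that reducing everything to $\mu_0$ is legitimate.
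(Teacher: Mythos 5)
The paper gives no proof of this theorem: it is quoted verbatim from \cite{MVZ}, and your argument — homogenizing the Lagrangian spectral invariant $l_+$ via subadditivity/Fekete, deriving (1)--(3) from the triangle inequality, conjugation invariance and the energy estimate applied to the iterates, getting (4) from the uniform displacement-energy bound on $l_+(\phi^k)$, and transferring from $\mu_0$ to $\mu_a$ by the fibrewise shear $\tau_\beta$ — is exactly the route taken in that reference. Your assessment of where the real content lies (the displacement bound, and the fact that conjugation by the non-compactly-supported $\tau_\beta$ preserves the class $\cG$, the fibrewise sup/inf of Hamiltonians, and displaceability) is accurate, so the proposal is correct and matches the source.
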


\begin{rem}
These definitions and properties extend to all complete flows \cite{MVZ}. 
\end{rem}

\begin{thm}\cite{MVZ}
Let $H$ be a Tonelli Hamiltonian. Then $\alpha_H(a)=\mu_a(\phi_H)$.
\end{thm}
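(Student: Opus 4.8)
The plan is to trade the equality for a uniform, $k$-independent bound on spectral invariants, after two standard reductions. \textbf{Reduction to $a=0$.} Fixing a closed representative $\beta$ of $a$ and $K=K_a=H(\cdot,\,\cdot-\beta)$ as in the definition, one has $\mu_a(\phi_H)=\mu_0(\phi_K)$ by construction, while the Legendre dual of $K$ is $L+\langle\beta,\cdot\rangle$, so a one-line computation with asymptotic cycles gives $\alpha_K(0)=\alpha_H(a)$; it therefore suffices to prove $\mu_0(\phi_H)=\alpha_H(0)$ for every Tonelli $H$, which I rename $K\to H$. \textbf{Reduction to $\alpha_H(0)=0$.} Since $H$ is autonomous, $\phi_H^k=\phi_{kH}^1$, so $\mu_0(\phi_H)=\lim_k\frac1k l_+(\phi_H^k(0_M))$; replacing $H$ by $H-c$ does not change the flow but, with the Milinkovi\'c--Oh normalization $S=\int_\gamma-\theta+H_t$, adds $-kc$ to the generating function of the time-$k$ map, hence $\mu_0(\phi_{H-c})=\mu_0(\phi_H)-c$, while on the other side $\alpha_{H-c}(0)=\alpha_H(0)-c$ because the dual Lagrangian becomes $L+c$. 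Taking $c=\alpha_H(0)$, I may assume $\alpha_H(0)=0$, and the claim reduces to $\sup_k|l_+(\phi_H^k(0_M))|<\infty$, which forces $\mu_0(\phi_H)=0=\alpha_H(0)$.

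The core step is to identify $l_+(\phi_H^k(0_M))$ with an extremum of the $k$-th Lax--Oleinik iterate of the zero function. Using the normalized gfqi of $\phi_H^k(0_M)$ and Chaperon's composition formula for generating functions --- whose effect on filtered homology mirrors the semigroup property of the Lax--Oleinik operator --- the claim to establish is
\[l_+\bigl(\phi_H^k(0_M)\bigr)=c_0+\max_{x\in M}u_k(x),\qquad u_k(x):=\inf_{y\in M}\ \inf_{\gamma(0)=y,\ \gamma(k)=x}\ \int_0^k L(\gamma,\dot\gamma)\,dt,\]
with $c_0$ a fixed normalization constant (and up to the sign conventions of the Legendre/gfqi dictionary): the topological minimax over the fundamental class of $M$ defining $l(\,[M],\cdot\,)$ is exactly the ``supremum over the endpoint'' of the value function, i.e.\ the discrete viscosity solution read off by its maximum, and the inner infimum over $y$ records that one starts on the whole zero section.

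Granting this identity with $\alpha_H(0)=0$, the remaining bound is a standard fact of weak KAM theory. For the lower bound, fix a Lipschitz critical subsolution $w$ of $H$, which exists precisely because $\alpha_H(0)$ is the Ma\~n\'e critical value: Fenchel's inequality together with $H(x,d_xw)\le0$ gives $\int_0^k L(\gamma,\dot\gamma)\,dt\ge w(\gamma(k))-w(\gamma(0))\ge-\osc(w)$ for every absolutely continuous curve, hence $u_k\ge-\osc(w)$ for all $k$. For the upper bound, finiteness of the Peierls barrier --- equivalent to $\alpha_H(0)=0$ being critical --- produces a constant $C$ with $u_k\le C$ for all large $k$. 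Combining, $|l_+(\phi_H^k(0_M))|\le|c_0|+\max(\osc(w),C)$ for every $k$, so $\mu_0(\phi_H)=0=\alpha_H(0)$; undoing the two reductions yields $\mu_a(\phi_H)=\alpha_H(a)$ in general.

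The one genuine obstacle is the variational identity above: setting up the precise dictionary between the normalized generating function on an \emph{arbitrary} cotangent bundle (the Monzner--Vichery--Zapolsky construction) and the Lax--Oleinik value function, and correctly fixing all sign conventions. A weaker but sufficient substitute is the pair of inequalities $-\osc(w)\le l_+(\phi_H^k(0_M))-c_0\le C$, provable more softly: ``$\ge$'' follows from the fact that each spectral value is a critical value, hence a normalized action $-\int_0^k L(\gamma,\dot\gamma)\,dt$ of a Hamilton orbit issued from $0_M$, bounded below by the domination estimate; ``$\le$'' can be obtained by flowing, instead of $0_M$, the graph $\Gamma_w=\{(x,d_xw)\}$ of a smooth subsolution and invoking the finite-propagation (energy) estimate for spectral invariants, the autonomy of $H$ (so that $\max_{\phi_H^t(\Gamma_w)}H=\max_xH(x,d_xw)\le0$ for all $t$), and the fact that passing from $\Gamma_w$ back to $0_M$ costs at most $\osc(w)$, independently of $k$.
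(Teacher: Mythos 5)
This theorem is not proved in the paper at all: it is imported from [MVZ], where the argument runs through exactly the identification you flag as ``the one genuine obstacle'' --- namely Joukovskaia's theorem that, for a fiberwise convex Hamiltonian, the minimax/variational solution built from a gfqi coincides with the Lax--Oleinik (viscosity) value function --- combined with weak KAM estimates. So your two preliminary reductions and your overall strategy are the standard ones; the problem is that your write-up never actually crosses the one hard bridge. Your main line explicitly ``grants'' the identity $l_+(\phi_H^k(0_M))=c_0+\max_x u_k(x)$, but this identity is the entire content of the theorem: it is the only place where the Tonelli hypothesis (strict convexity in the fibers) is used, and it is a genuine theorem, not a bookkeeping step.

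The ``weaker but sufficient substitute'' does not repair this, because once you fix a sign convention for the normalized action both of your soft estimates bound $l_+(\phi_H^k(0_M))$ from the \emph{same} side. Concretely: with the paper's normalization $S=\int_\gamma(-\theta+H\,dt)$, every critical value of $S_k$ equals $-\int_0^k L(\gamma,\dot\gamma)\,dt$ for a chord from $0_M$ to $0_M$, and the domination inequality $\int_0^k L\ge w(\gamma(k))-w(\gamma(0))\ge-\osc(w)$ bounds these critical values (hence all spectral values) from \emph{above} by $\osc(w)$, not from below as you assert; meanwhile the energy estimate along $\phi_H^t(\Gamma_w)$ with $\max_{\Gamma_w}H\le 0$ gives $\frac{d}{dt}\,l_+\le\max_{\phi_H^t(\Gamma_w)}H\le 0$, which is again an upper bound. (With the opposite convention $S=\int_\gamma(\theta-H\,dt)$ both arguments flip simultaneously and both give lower bounds.) So you have proved one inequality twice and the other not at all. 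The missing inequality --- that the minimax over the fundamental class cannot be improved past $\max_x u_k$ by a cleverer cycle --- is precisely where convexity must enter; it can be obtained either by actually proving the Joukovskaia identification (fiberwise reduction of the broken-geodesic generating function to a convex function of the intermediate variables), or more softly via Viterbo's duality $l(1,S)+l([M],-S)=0$ applied to the conjugate Tonelli Hamiltonian $\bar H(q,p)=H(q,-p)$, which converts the missing bound into the easy one for $\bar H$. As it stands, your argument would apply verbatim to a non-convex $H$ with a subsolution-like primitive, where the conclusion is false, so the gap is essential and not merely expository.
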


The previous statement allows us to see $\mu$ as a generalization of Mather's $\alpha$ functional for non convex Hamiltonian. Moreover, it recovers the symplectic invariance property (as in \cite{Bernard_sympl_aspects}) and extend it. Indeed, Bernard proved symplectic invariance of the $\alpha$ functional for symplectomorphisms that preserve the Tonelli property. Others statement of the classical $\alpha$ functional can be recovered according to Theorem \ref{MVZmain}.

\subsubsection{Reformulation}

We want to consider all parameters in $H^1(M)$ in a single generating function. We choose a basis of $H^1(M;\R)$ denoted by $\{\beta_i\}$ and consider the following Hamiltonian $\tilde H\fc T^*M\times T^*\R^n$, $n=dim_\R(H^1(M;\R))$:

 \[\tilde H(x,p,\lambda,\lambda^*):=\chi(\lambda) K_{\Sigma\lambda_i.\beta_i(x)}(x,p)\]
\noindent with $\chi$ a compact supported function with value $1$ on a large compact set $A\subset \R^n$. Indeed, we are interested in properties that are local with respect to $\lambda$ that we will suppose belonging to $A$. The compact cut-off is here only to ensure the existence of some generating function.

\begin{rem}
To make the notation as light as possible, we introduce the notation $\hat{\lambda}$ instead of $\sum \lambda_i \beta_i$.
\end{rem}

We study the dynamics induced by this Hamiltonian. The image of $M$ by its time $k$ diffeomorphism has a generating function $\tilde S_k\fc M\times \R^n\times\R^k\to \R$ because of the compactness of the support.
The dynamics lets $\lambda$ constant along the evolution. So, critical orbits going from the zero section  to the conormal of  $M\times \lambda_0$ for some $\lambda_0\in A$ satisfy $\lambda=\lambda_0$. They are critical orbits for the Hamiltonian $K_{\lambda_0}$.

\begin{prop}
Let \[ f_k(\lambda):=\frac{l([M]\otimes [1_\lambda], \tilde S_k)}{k} \ .\]
Then, the sequence $f_k$ converges uniformly on all compact subset and \[\mu_{\hat \lambda} (\phi_H):=\lim\limits_{k\to\infty}f_k(\lambda) \ .\]
\end{prop}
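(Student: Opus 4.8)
The plan is to identify $f_k(\lambda)$ with a suitable rescaling of the spectral invariant of the Lagrangian obtained by flowing the zero section under $\phi_{K_{\hat\lambda}}$, and then to leverage the already established properties of $\mu_a$ in Theorem \ref{MVZmain} together with the subadditivity/monotonicity properties of spectral invariants to obtain both convergence and the identification of the limit. First I would fix $\lambda_0\in A$ and observe, as noted just before the statement, that because $\tilde H$ leaves $\lambda$ constant along its flow, the critical orbits of $\tilde S_k$ relevant to the homology class $[M]\otimes[1_\lambda]$ are exactly the critical orbits for $K_{\hat\lambda_0}$; concretely this means $l([M]\otimes[1_\lambda],\tilde S_k)$ coincides (up to the normalization convention for generating functions recalled in Section \ref{section_definition_Lagr_sp_invts_gf}) with the spectral invariant $l_+\bigl(\phi_{K_{\hat\lambda_0}}^k\bigr)$ of the $k$-th iterate, evaluated against the fundamental class. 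This is where the cut-off function $\chi$ must be handled: on the compact set $A$ it equals $1$, so for $\lambda_0$ in the interior of $A$ the dynamics of $\tilde H$ and of $K_{\hat\lambda_0}$ agree on a neighborhood of the relevant orbits, and the generating functions agree up to fibered equivalence, hence give the same spectral value.

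Next I would invoke the definition of $\mu_a$ and the fact (Theorem \ref{MVZmain}(1), plus the defining limit $\mu_0(\phi)=\lim_k l_+(\phi^k)/k$) that
\[
\mu_{\hat\lambda_0}(\phi_H)=\mu_0\bigl(\phi_{K_{\hat\lambda_0}}\bigr)=\lim_{k\to\infty}\frac{l_+\bigl(\phi_{K_{\hat\lambda_0}}^k\bigr)}{k}.
\]
Combined with the identification of the previous paragraph, this already gives pointwise convergence $f_k(\lambda)\to\mu_{\hat\lambda}(\phi_H)$ for each $\lambda\in A^\circ$. The content that remains is the \emph{uniform} convergence on compact subsets. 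For this I would use the Lipschitz estimate of Theorem \ref{MVZmain}(3): applied to the Hamiltonians $K_{\hat\lambda}$ and $K_{\hat\lambda'}$, it shows that $\lambda\mapsto \mu_{\hat\lambda}(\phi_H)$ is Lipschitz on $A^\circ$ with a constant controlled by $\sup_{x}|H(x,p-\hat\lambda(x))-H(x,p-\hat\lambda'(x))|$, which in turn is controlled linearly in $|\lambda-\lambda'|$ by the $C^1$ bound on $H$ over the (compact) region swept out. The same inequality, applied at finite $k$ between $\tilde S_k$ at parameters $\lambda$ and $\lambda'$ — equivalently between the $k$-th iterates of $\phi_{K_{\hat\lambda}}$ and $\phi_{K_{\hat\lambda'}}$, using property (1) to pull out the factor $k$ — gives that each $f_k$ is Lipschitz on $A^\circ$ with a constant \emph{independent of $k$}. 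An equi-Lipschitz family converging pointwise converges uniformly on compact sets (Arzelà–Ascoli, or a direct $\varepsilon/3$ argument), which is exactly the claim.

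The main obstacle I anticipate is the bookkeeping in the first step: making precise the claim that the spectral invariant of $\tilde S_k$ for the class $[M]\otimes[1_\lambda]$ equals the spectral invariant $l_+$ of the iterated time-one map of $K_{\hat\lambda}$. One must check that the Künneth/Thom factor $[1_\lambda]$ in the $\R^n$ direction contributes nothing to the action filtration (the flow is trivial in the $(\lambda,\lambda^*)$ directions once $\chi\equiv 1$, so the generating function splits as a sum of $\tilde S_k$ restricted to $\{\lambda=\lambda_0\}$ plus a nondegenerate quadratic form in the extra variables, and spectral invariants are insensitive to stabilization by quadratic forms), and that the normalization conventions from \cite{Milinkovic_Oh_gf_versus_action} match the sign/normalization used in the definition of $\mu_0$ via $l_+$. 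Once this identification is in hand, everything else is a formal consequence of Theorem \ref{MVZmain} and elementary real analysis. I would also remark that the existence of the generating functions $\tilde S_k$ is guaranteed by the Laudenbach–Sikorav theorem quoted above together with the compact support ensured by $\chi$, so no additional analytic input is needed.
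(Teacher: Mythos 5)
Your proposal is correct and follows essentially the same route as the paper: the identification $f_k(\lambda)=l_+(\phi^k_{K_{\hat\lambda}})/k$ via the constancy of $\lambda$ along the flow of $\tilde H$ and the Milinkovi\'c--Oh normalization of the generating function, followed by pointwise convergence from the definition of $\mu_0$ and an equi-Lipschitz estimate to upgrade to uniform convergence on compact sets. The only minor slip is attributing the factor $k$ in the Lipschitz bound to Theorem \ref{MVZmain}(1), which concerns the homogenized $\mu_a$ rather than the finite-$k$ spectral invariants; the correct source, as in the paper, is Oh's inequality applied to the $k$-th iterates, giving $l_+(\phi_H^k)-l_+(\phi_G^k)\leq k\int_0^1\max(H_s-G_s)\,ds$, which yields the $k$-independent Lipschitz constant for $f_k$.
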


\begin{proof}
First, we prove that $f_k(\lambda)=\frac{l_+(\phi^k_{K_{\hat\lambda}})}{k}$.


We look at the Hamilton equations associated to $\tilde H$. Because of the independence according to $\lambda^*$, $\lambda(t)=\lambda(0)$ and the dynamics restricted to $\lambda=\lambda(0)$ is the same as the one defined by $K_{\Sigma\lambda_i a_i}$ for $\lambda \in A$ fixed. There is a one to one correspondence between the orbits of $\tilde H$ going from $M\times\R^n$ to the conormal of $M\times \{\lambda\}$ with the orbits of $K_{\hat\lambda}$ (fixed $\lambda$) going from and to the zero section. Moreover they have the same action, since $d\lambda$ vanishes.

Moreover, $\tilde S(x,\lambda,\xi)=:S_\lambda(x,\xi)$ with fixed $\lambda \in A$ is a gfqi of $\phi_{K_{\hat \lambda}}(M)$ with the correct normalization.

 
 More precisely, let's denote the orbit of $q\in M$ by $\gamma_{\phi(q)}(t):=\phi^t(q)$. Then, for all $\lambda \in A$ the normalization of $\tilde S$ gives :
 
 \[\int_{\gamma_{\phi(g)}} -\theta + K_{\hat\lambda}=\int_{(\gamma_{\phi(g)},\lambda)} -(\theta +\lambda^*d\lambda )+ \tilde H = \tilde S(x,\lambda,\xi)\]
 
\noindent with $(x,\lambda,\xi)$ satisfying $\frac{\partial S}{\partial \xi}(x,\lambda,\xi)=0$ and $\gamma_{\phi_g}(1)=(x,\frac{\partial S}{\partial x})$.
 By lemma 2.16 in \cite{MVZ} which is proved in \cite{Milinkovic_Oh_gf_versus_action}, $S_\lambda(x,\xi)$ with fixed $\lambda \in A$ is a gfqi of $\phi_{K_{\hat \lambda}}(M)$.

Applying again this lemma, we get: \[l_+(\phi_{K_{\Sigma\lambda_i a_i}})=l_+(S_\lambda)=l([M]\otimes 1_\lambda,S) \ .\]

Let us now prove the uniform convergence of the sequence $f_k$.
The previous equality shows that $f_k$ converges point-wise. The uniform convergence then follows from the fact that the function $f_k$ are equi-Lipschitz on compact sets.
Indeed, we use the following bound on spectral invariants due to Oh for all Hamiltonian $H$ and $G$:

\[l_+(\phi_H)-l_+(\phi_G)\leq \int_0^ 1 \max(H_t-K_t) dt \ .\]

\noindent In our situation the computation is similar to \cite{MVZ} proof of (ix) of the main theorem.
It follows that,

\[l_+(\phi_H^k)-l_+(\phi^k _G)\leq k\int_0^ 1 ||H_s-G_s||ds \ .\]

\noindent Hence,

\[f_k(a)-f_k(b)\leq \int_0^1|dH_t|(a-b)<C|a-b| \]

\noindent where for all 1-form $\chi$, $|dH_t|(\chi):=\max_{(q,p)\in T^*M} |\langle d_{(q,p)}H_t|_{T_{(q,p)}^{vert}T^*M},\chi\rangle |$ and $C$ only depend on the radius of a large ball containing $a$ and $b$.

Thus, $f_k$ converges uniformly.
\end{proof}

\section{Subdifferential}

As in the classical Aubry Mather theory,  we want to link the subdifferential of Mather's functional with the existence of invariant measures with action  and rotation vector prescribed. There exists in the literature a wide number of subdifferentials. In the classical theory the functional is convex, so it is convenient to use the convex subdifferential (Rockefellar-Moreau defined through Hahn-Banach theorem). This is the dual convex cone to the epigraph of the functional.
 
In our framework (non convexity, many different definitions exist and often differ. Nevertheless, all subdifferentials satisfy a certain family of Axioms  \cite{Ioffe}. It constitutes the field of Non-smooth analysis for which we refer to \cite{Clarke,Ioffe}.

However, for our purpose, we want to use a subdifferential that behaves well with respect to to $C^0$ convergence of functions, which is related to symplectic geometry of the cotangent bundle and as big as possible.


\begin{rem}
In order to be as general as possible, we give two requirements for a subdifferential $\partial$ to satisfy the future theorems.
  
  \begin{enumerate}[I]
   \item  For all $f\fc \R^n \to \R$ Lipschitz, $\partial f \subset \partial_c f$;
   \item  Let $X$ a manifold and $f_n \fc X\to \R$ a sequence of $C^0$ functions that $C^0$ converge to $f$. Then,

\[\partial f\subset \limsup \partial f_n\]

\noindent with $\limsup \partial f_n:= \bigcap\limits_{k>0}\overline{\bigcup\limits_{n>k} \partial {f_n}}$.

  \end{enumerate}

  Because of the locality property of subdifferentials \cite{Ioffe} (independance with respect to the complement of small neighborhoods), the $C^0$ convergence assumption can be replaced by $C^0$ convergence on every compact.

From now, all subdifferentials except the Clarke subdifferential will satisfy property (I) and (II).  They will be denoted by $\partial$. The main example being the homological subdifferential.
  
 \end{rem}

As far as the author knows, only two such subdifferentials exist: the approximate (or $G-$) subdifferential $\partial_a$ \cite{jourani} and the homological subdifferential $\partial$ \cite{HDC}.

\begin{definition}
 Let $A\subset T^*M$. We denote by $co(A)$ the convex hull of A in each fiber.
\end{definition}

\begin{definition}
Let $f:X\to \R$ be a lower semi-continuous function, $\rho:T^*(X\times\R) \to T^*X$, $\rho(x,t;\xi,\tau):=(x,\frac{\xi}{\tau})$. The subset $\partial f\subset T^*M$ defined by : \[\partial f:=\rho(\dot{SS}(\R_{\{f(x)\leq t\}}))\] 

\noindent with $\R_{f(x)\leq t}$ the constant sheaf on the epigraph of $f$ and $\dot{SS}$ the complement of the zero section in the singular support (see \cite{KS,HDC}).
\end{definition}

\begin{definition}\cite{Clarke}
 Let $f:X\to \R$ be a Lipschitz function. Then the Clarke subdifferential is \[\partial_c f(\lambda):=co(\lim\limits_{n\to \infty}(df(\lambda_n)),\lambda_n\to\lambda \ for \ almost\ every \lambda_n).\]
\end{definition} 

 Almost all generalization of subdifferential agree with this one in the convex case. But here, the homogenized Hamiltonian is no longer convex and is Lipschitzian (even continuous if we start with $H$ continuous).

It has  been proved in \cite{HDC} that for any Lipschitz function $f$ :

\[\partial_a f \subset \partial f \subset \partial_c f=co(\partial_a f)\ .\] 

Strict inclusions are known to occur but this formula is also sufficient to prove the non emptiness of the subdifferentials considered here. Indeed, $\partial_a$ is known to be non-empty at every point \cite{Ioffe}.

Moreover, $\partial f$ fits very well in the framework of \cite{Tam}, \cite{GKS} about non-displaceability of Lagrangian in cotangent bundle using sheaf theoretical methods. Note that an unified approach of Mather's theory through sheaf theory would give new insights.

Let us mention two of the main properties of the homological subdifferential.

\begin{prop}
Let $X$ be a manifold and $f_n \fc X\to \R$ a sequence of $C^0$ functions that $C^0$ converge to $f$. Then,

\[\partial f\subset \limsup \partial f_n\ .\]

\end{prop}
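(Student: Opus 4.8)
The plan is to reduce the statement to the analogous property of the singular support, which is one of the standard microlocal facts about sheaves established in Kashiwara--Schapira \cite{KS}. Recall that $\partial f$ is defined as $\rho(\dot{SS}(\R_{\{f(x)\le t\}}))$, the image under the fiberwise projection $\rho(x,t;\xi,\tau)=(x,\xi/\tau)$ of the non-zero part of the singular support of the constant sheaf on the epigraph of $f$. So the proposition will follow once we control how the singular support of the constant sheaves on the epigraphs $\mathrm{Epi}(f_n)$ behaves under $C^0$ convergence of the $f_n$ to $f$.

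First I would set up the geometric input: $C^0$ convergence $f_n\to f$ implies that the epigraphs $\mathrm{Epi}(f_n)=\{f_n(x)\le t\}$ converge, in a suitable Hausdorff/Kuratowski sense on compact subsets of $X\times\R$, to $\mathrm{Epi}(f)$ (one uses lower semicontinuity of $f$ and the uniform estimate $|f_n-f|<\varepsilon$ on compacts to squeeze $\mathrm{Epi}(f)$ between $\mathrm{Epi}(f_n)$ shifted up and down by $\varepsilon$). Then I would invoke the semicontinuity of the singular support: the microsupport is closed, and for a suitably convergent family of closed sets carrying constant sheaves one has $SS(\R_{\mathrm{Epi}(f)})\subset \limsup_n SS(\R_{\mathrm{Epi}(f_n)})$ — intuitively, a codirection in which $\R_{\mathrm{Epi}(f)}$ fails to propagate must be a limit of codirections in which the $\R_{\mathrm{Epi}(f_n)}$ fail to propagate, because propagation is an open/stable condition (non-characteristic deformation, Prop.~2.7.2 and the microlocal cut-off lemma in \cite{KS}). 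This is presumably exactly the sheaf-theoretic lemma recorded in \cite{HDC}, so I would cite it for the key inclusion $\dot{SS}(\R_{\mathrm{Epi}(f)})\subset \limsup_n \dot{SS}(\R_{\mathrm{Epi}(f_n)})$.

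Finally I would push the inclusion through $\rho$. Since $\rho$ is continuous on the open set $\{\tau\ne 0\}$ and the $\limsup$ (defined as $\bigcap_{k>0}\overline{\bigcup_{n>k}(\cdot)}$) is compatible with images under continuous maps on the relevant region — one checks $\rho\bigl(\limsup_n \dot{SS}(\R_{\mathrm{Epi}(f_n)})\bigr)\subset \limsup_n \rho\bigl(\dot{SS}(\R_{\mathrm{Epi}(f_n)})\bigr)=\limsup_n \partial f_n$, using that a convergent sequence of codirections with $\tau\ne 0$ in the limit eventually has $\tau$ bounded away from $0$ so $\rho$ is uniformly continuous there — we conclude $\partial f=\rho(\dot{SS}(\R_{\mathrm{Epi}(f)}))\subset \limsup_n\partial f_n$. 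The locality remark already in the text then upgrades $C^0$ convergence to $C^0$ convergence on compacts at no cost.

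The main obstacle, and the step deserving the most care, is the semicontinuity of the singular support under $C^0$ (merely topological, not smooth) convergence of the epigraphs: one must ensure the deformation/propagation argument of \cite{KS} applies without any regularity on $f_n$ or $f$ beyond lower semicontinuity, and that the $\limsup$ really captures all of $\dot{SS}(\R_{\mathrm{Epi}(f)})$ rather than just the part over points where $f$ is, say, continuous. I expect this to be handled by the microlocal non-characteristic deformation lemma applied to the sublevel filtration in the $t$-variable, together with a diagonal argument over an exhaustion of $X$ by compacts; the $\tau\ne 0$ normalization built into $\rho$ is what makes the final passage to images harmless.
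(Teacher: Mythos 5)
The paper itself gives no proof of this proposition: it is recorded as one of the known properties of the homological subdifferential, with the actual argument deferred to \cite{HDC}. Your outline --- rewrite $\partial f$ as $\rho(\dot{SS}(\R_{\{f\le t\}}))$, establish an upper semicontinuity statement for the singular supports of the epigraph sheaves, and push it through $\rho$ --- is the natural route and is consistent with that reference, and your final step through $\rho$ (a conic covector with $\tau\neq 0$ in the limit forces $\tau$ bounded away from $0$ along a normalized approximating sequence) is fine once the middle step is secured. So as a citation-level reconstruction the proposal matches what the paper does.

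As a self-contained proof, however, the middle step is where all the content lives, and your justification of it does not close. Upper semicontinuity of $\dot{SS}(\R_{Z_n})$ under mere Hausdorff/Kuratowski convergence of closed sets $Z_n\to Z$ is false in general (oscillating approximations of a half-space can create or destroy conormal directions in the limit); there is no off-the-shelf statement of this kind in \cite{KS}, and ``propagation is an open/stable condition'' is a property of a fixed sheaf, not one that automatically passes to a limit of sheaves. What makes the epigraph case work is the structure you mention only in passing: the sets $\{f_n\le t\}$ are stable under translation by the cone $t\ge 0$, so the uniform bound $|f_n-f|\le\eps$ on compacts gives a two-sided sandwich by $t$-translates, i.e.\ an interleaving of $\R_{\{f_n\le t\}}$ and $\R_{\{f\le t\}}$ by the shifts $t\mapsto t\pm\eps$. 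It is this interleaving, combined with a microlocal cut-off argument in the half-space $\tau\ge 0$, that yields $\dot{SS}(\R_{\{f\le t\}})\subset\limsup_n\dot{SS}(\R_{\{f_n\le t\}})$, and it is also what guarantees that the approximating covectors can be chosen with $\tau>0$ so that your passage through $\rho$ applies. If the intent is an actual proof rather than a pointer to \cite{HDC}, that convolution/cut-off argument is the part that must be written out.
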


\begin{prop}\label{inclusion}
Let $S\fc  \R^n\times X\times \R^q\to  \R$ such that $S_\lambda$ is a gfqi. Then the normalized action selector $f$ satisfies :

\[\partial  f(\lambda)\subset co(\left\{ d_\lambda  S_(\lambda,x,\xi)\ |\  (x,\xi) \in C(\lambda)\right\}) \ .\]

where the set $C(\lambda)$ is defined by \[C(\lambda):=\{(x,\xi)|d_{(x,\xi)}  S(\lambda,x,\xi)=0, S(\lambda,x,\xi)=f(\lambda)\} \ . \]
\end{prop}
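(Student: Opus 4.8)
The plan is to combine the sheaf-theoretic description of the homological subdifferential with the variational meaning of a normalized gfqi. Write $g(\lambda,t)$ for the epigraph indicator data: we want to control $\dot{SS}(\R_{\{f(\lambda)\le t\}})$ inside $T^*(\R^n\times\R)$ and then apply $\rho$. Since $S_\lambda$ is a gfqi for every $\lambda$, the full function $S$ on $\R^n\times X\times\R^q$ is a generating family for the family of Lagrangians, and $f(\lambda)=l([M]\otimes 1_\lambda, S_\lambda)$ is the associated spectral value. The key classical fact I would use is that the epigraph $\{t\ge f(\lambda)\}$ is, up to the Thom isomorphism and stabilization by the quadratic form $B$, the image under the projection $\pi\fc \R^n\times X\times\R^q\times\R\to\R^n\times\R$ of the sublevel region $\{S(\lambda,x,\xi)\le t\}$; more precisely, $f$ is the min-max value so that $\{f\le t\}$ is where the relevant homology class becomes visible. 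Thus $\R_{\{f\le t\}}$ is (a summand of) the proper pushforward $R\pi_!\,\R_{\{S\le t\}}$, and the singular support estimate for proper direct images (\cite{KS}, the bound $SS(R\pi_! F)\subset $ the image of $SS(F)$ under the correspondence $T^*(\text{source})\leftarrow \R^n\times X\times\R^q\times\R\times_{\R^n\times\R}T^*(\R^n\times\R)\to T^*(\R^n\times\R)$) gives exactly the inclusion we want.

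\textbf{Steps in order.} First I would fix $\lambda_0$ and recall, from Proposition~\ref{inclusion}'s hypotheses, that $S_{\lambda_0}$ being a gfqi means the critical set of $S(\lambda_0,\cdot,\cdot)$ at level $f(\lambda_0)$ is exactly $C(\lambda_0)$, and that near such a critical point the only contribution to $SS(\R_{\{S\le t\}})$ over $(\lambda_0,f(\lambda_0))$ comes from these critical points. Second, I would make precise the sheaf-theoretic statement that $\R_{\{f(\lambda)\le t\}}$ is a direct summand (via the Thom class of $E^-$, exactly as in the definition of $H_*(S\fc M)$ in the excerpt) of $R\pi_!\R_{\{S(\lambda,x,\xi)\le t\}}$; this is the sheaf translation of the min-max/inclusion-morphism $i^a$ used to define $l(\alpha,S)$. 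Third, apply the Kashiwara--Schapira microsupport estimate for $R\pi_!$: a covector $(\lambda;\nu)\in SS(\R_{\{f\le t\}})$ over a point $(\lambda_0,t_0)$ with $t_0=f(\lambda_0)$ must come from a covector in $SS(\R_{\{S\le t\}})$ at some $(\lambda_0,x,\xi,t_0)$ whose $(x,\xi)$-components vanish and whose $\lambda$-component is $\nu$ (after the $\rho$-rescaling by $\tau$); vanishing of the $(x,\xi)$-components forces $d_{(x,\xi)}S(\lambda_0,x,\xi)=0$, and being on the relevant level forces $S(\lambda_0,x,\xi)=t_0=f(\lambda_0)$, i.e. $(x,\xi)\in C(\lambda_0)$, with $\nu=d_\lambda S(\lambda_0,x,\xi)$. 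Fourth, since $SS$ of a constructible sheaf is closed and conic but not a priori convex, and $\partial f=\rho(\dot{SS})$ can a priori spread out, I would note that the homological subdifferential as defined is already contained in the convex hull in each fiber of these directions — alternatively, I would pass through the known sandwich $\partial f\subset \partial_c f=co(\partial_a f)$ and identify the approximate-subdifferential generators with limits of $d_\lambda S$ along nearby regular points of $f$, which by the same microsupport argument lie in the closure of $\{d_\lambda S(\lambda,x,\xi): (x,\xi)\in C(\lambda)\}$, then take $co$.

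\textbf{Main obstacle.} The delicate point is the second step: cleanly identifying $\R_{\{f(\lambda)\le t\}}$ with the correct summand of $R\pi_!\R_{\{S\le t\}}$ uniformly in $\lambda$, i.e. checking that the family version of the Thom isomorphism and of the inclusion morphism $i^a$ behaves well and that no microsupport is lost or gained in the non-compact $\xi$-directions — this is where the ``quadratic at infinity'' hypothesis does real work (it guarantees the relevant sublevel pairs stabilize and that $R\pi_!$ here agrees with $R\pi_*$ on the relevant truncations, so the microsupport estimate is sharp). A secondary technical nuisance is the $\rho$-rescaling $(\xi,\tau)\mapsto \xi/\tau$ together with the fact that at a point where $f$ is differentiable one genuinely gets the gradient as a single covector, whereas at non-differentiability one only controls the conic, possibly non-convex, microsupport; taking convex hulls in the fibers, as in the statement, absorbs this, and matches the appearance of $co(\cdot)$ on the right-hand side. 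I expect the rest — the chain-rule identities $d_{(x,\xi)}S=0$, $S=f(\lambda)$ characterizing $C(\lambda)$, and the equality $\nu=d_\lambda S$ — to be a routine unwinding once the sheaf identification is in place.
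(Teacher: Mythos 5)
Your sheaf-theoretic route is genuinely different from the proof in the paper, and it has a gap precisely at the step you yourself flag as delicate. The claim that $\R_{\{f(\lambda)\le t\}}$ is a direct summand of $R\pi_!\,\R_{\{S\le t\}}$ is not justified and cannot be taken for granted: for each fixed $\lambda$ the one-parameter barcode of the pushforward does contain a half-infinite bar born at $f(\lambda)$ (the bar carrying the class $[M]\otimes 1_\lambda$), but these slice-wise summands need not assemble into a global summand over $\R^n_\lambda\times\R_t$ --- multi-parameter constructible sheaves do not decompose into such interval pieces, and at crossings of spectral values the generator carrying the fundamental class can be exchanged. Without the summand statement, the Kashiwara--Schapira estimate for $SS(R\pi_!F)$ tells you nothing about $SS(\R_{\{f\le t\}})$. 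Your fourth step is also circular as written: $\dot{SS}$ of a sheaf is conic but not fiberwise convex, so ``the homological subdifferential as defined is already contained in the convex hull of these directions'' is essentially the content of the proposition, not something you may note in passing.

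The paper sidesteps all of this by proving the stronger inclusion for the Clarke subdifferential, $\partial_c f(\lambda)\subset co\{d_\lambda S(\lambda,x,\xi)\mid (x,\xi)\in C(\lambda)\}$, and then invoking property (I), $\partial f\subset\partial_c f$ --- which is the legitimate version of your fallback, except that the fallback still leans on the same unproved microsupport identification. The actual mechanism is purely variational, following Wei: a deformation lemma (using smoothness and Palais--Smale for the gfqi) shows that the minimax value $f(\lambda)$ is computed, up to $\epsilon$, by cycles whose maximum is attained in a $\delta$-neighborhood of $C(\lambda)$; at a point of differentiability one compares $f(\lambda_0+vu)$ with $f(\lambda_0)$ along such an almost-optimal cycle, and a mean value argument gives $\langle df(\lambda_0),u\rangle\le\max_{(x,\xi)\in C(\lambda_0)}\langle d_\lambda S(\lambda_0,x,\xi),u\rangle$ for every direction $u$, hence $df(\lambda_0)\in co\{\cdots\}$ by convex duality; the general case then follows from Clarke's definition of $\partial_c f$ as the convex hull of limits of gradients, together with the closedness of $\{d_\lambda S(\lambda,x,\xi):(x,\xi)\in C(\lambda)\}$. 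If you want to salvage the sheaf route, you would need an honest microsupport bound for the epigraph sheaf of a spectral-invariant selector, which is a statement of a different nature from the proper direct image estimate and has to be proved, not quoted.
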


\begin{proof}
We will show the bound for the Clarke subdifferential which will imply it for $\partial$.
It can be proved by methods extracted from \cite{Wei} lemma 2.27. In this article, the author deals in a larger class of generality, $S$ being non-smooth. For the sake of completeness, we rewrite the proof in our context.

Here is a condensed formula for spectral invariant we can deduce from the definition:

\[f(\lambda):=l(1_\lambda\otimes [M],S)=\inf\limits_{[\sigma]=1_\lambda\otimes [M]} \max\limits_{(\lambda,x,\xi)\in \sigma} S(\lambda,x,\xi)\ .\]

According to lemma 2.25 of \cite{Wei} :

\[\forall \delta>0, \exists \epsilon>0, s.t. f(\lambda)=\inf\limits_{\stackrel{[\sigma]=1_\lambda\otimes [M]}{\sigma\in\Sigma_\epsilon}} \max\limits_{(\lambda,x,\xi)\in \sigma\cap C^\delta(\lambda)} S(\lambda,x,\xi) \]

\noindent where $\Sigma_\epsilon:=\left\{\sigma\ |\ \max\limits_{(\lambda,x,\xi)\in\sigma} S(\lambda,x,\xi)\leq f(\lambda)+\epsilon \right\}$ and $C^\delta(\lambda)$ is a $\delta$ neighborhood of $C(\lambda)$.

The proof of this lemma is even easier in our context because functionals $S_\lambda(x,\xi)$ are smooth and satisfy the Palais-Smale property. We can deduce it from a classical deformation lemma for sublevels in the complement of $\delta$ neighborhood of critical points.

First, we suppose that $f$ is differentiable at $\lambda_0$. Let us consider a sufficiently small $s\in (0,1)$ such that for all $\lambda\in B_s(\lambda_0)$ then:

\[|S(\lambda,.)-S(\lambda_0,.)|\leq \frac{\epsilon}{4} \ .\]

\noindent This is possible because of the continuity of $S$ and because $S_\lambda$ is a generating function quadratic at infinity that can be chosen equal to a fixed quadratic form at infinity.

Let us define $u\in\R^n$, $v\leq 0$ with $\lambda_v:=\lambda_0+vu \in B_s(\lambda_0)$ and $v^2<\frac{\epsilon}{4}$. It is then easy to choose a cycle $\sigma_v$ in the right homology class such that:

\[\max\limits_{\sigma_v}S(\lambda_v,x,\xi)\leq f(\lambda_v)+v^2 \ .\]

It follows that:

\[\max\limits_{\sigma_v} S(\lambda_0,x,\xi)\leq \max\limits_{\sigma_v} S(\lambda_v,x,\xi)+\frac{\epsilon}{4}\leq f(\lambda_v)+\frac{\epsilon}{2}\leq f(\lambda_0)+\frac{3\epsilon}{4} \ .\]

Applying the lemma 2.25 from \cite{Wei}, we get:

\[f(\lambda_0)\leq \max\limits_{\sigma_v\cap C^\delta(\lambda_0)} S(\lambda_0,x,\xi)=S(\lambda_0,x_v,\xi_v) \]

with $(x_v,\xi_v)\in C^\delta(\lambda_0)\cap \sigma_v$.

Hence by a mean value argument, there exists $\lambda'_v$ such that,

\[v^{-1}(f(\lambda_v)-f(\lambda_0))\leq v^{-1}(S(\lambda_v,x_v,\xi_v)-S(\lambda_0,x_v,\xi_v))-v\leq d_\lambda S(\lambda'_v,x_v,\xi_v)-v\ .\]

Let us consider the upper limit of both sides. Then,

\[\langle df(\lambda_0),u \rangle\leq \max\limits_{(x,\xi)\in C(\lambda_0)} \langle d_\lambda S(\lambda_0,x,\xi),u \rangle,\ for\  u\in \R^n \ .\]

It implies by definition of subdifferential of convex functions that $df(\lambda_0)\in \partial g(0)$ with $g(y):=\max\limits_{(x,\xi)\in C(\lambda_0)}(\langle d_\lambda S(\lambda_0,x,\xi),y\rangle$. An easy computation shows that:

\[df(\lambda_0)\in co\left\{ d_\lambda S(\lambda_0,x,\xi),(x,\xi)\in C(\lambda_0) \right\}\ .\]

It follows in the general case (where f is  possibly not differentiable at $\lambda_0$) that:

\[\partial_c f(\lambda_0)=co\{ \lim\limits_{\lambda\to\lambda_0} df(\lambda)\}=co \left\{ \lim\limits_{\lambda\to\lambda_0} d_\lambda S(\lambda,x,\xi), (x,\xi)\in C(\lambda) \right\}\]

\[co \left\{ \lim\limits_{\lambda\to\lambda_0} d_\lambda S(\lambda_0,x,\xi), (x,\xi)\in C(\lambda_0) \right\}\ .\]

%
%
%
%
%
%
%
%
We have to show that $\{d_\lambda S(\lambda,x,\xi)| (x,\xi) \in C(\lambda))\}$ is closed which is true by the smoothness of $S$ and the compactness of $C(\lambda)$.
 \end{proof}

\section{Invariant measures}

Usually, Aubry Mather theory deals with measures supported on the tangent bundle. Here, we are interested in the symplectic cotangent bundle and so have to formulate statements in this framework. We give a couple of definitions and propositions about measures on symplectic manifolds and more precisely on cotangent bundles. 

\begin{definition}
Let $m$ be measure on $T^*M$ and $H\in C_c^2(T^*M\times \mathbb{S}^1)$. Then, $m$ is said to be $\phi_H$ invariant if :

 \[ {\phi_H}_\sharp m=m\ .\]

\noindent where ${\phi_H}_\sharp m$ denotes the push-forward of the measure $m$ by $\phi_H$.

\end{definition}

\begin{prop}
Let $m$ be a measure on $T^*M$ supported on a compact set and $H$ an autonomous Hamiltonian. Then $m$  is  $\phi_H^t$  invariant for all $t$ if and only if,

\[ \forall f\in C_c^2(T^*M), \int\{H,f\}dm=0 \ .\]
\end{prop}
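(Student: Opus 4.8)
The plan is to prove the standard characterization of invariance of a compactly supported measure under an autonomous Hamiltonian flow in terms of the vanishing of the integrals of Poisson brackets. First I would recall that for $f \in C_c^2(T^*M)$, the function $t \mapsto f\circ\phi_H^t$ is differentiable and $\frac{d}{dt}\bigl(f\circ\phi_H^t\bigr) = \{H,f\}\circ\phi_H^t$, which is the defining property of the Hamiltonian flow (with the sign convention fixed by the paper's conventions on $\theta$ and $X_H$). Since $H$ is autonomous the flow is defined for all time on the compact region where $m$ is supported, and since $m$ has compact support all the integrals below are finite.

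For the forward implication, assume $m$ is $\phi_H^t$-invariant for all $t$. Then for any $f \in C_c^2(T^*M)$ the function $I(t) := \int_{T^*M} f \, d({\phi_H^t}_\sharp m) = \int_{T^*M} f\circ\phi_H^t \, dm$ is constant in $t$, equal to $\int f\, dm$. Differentiating under the integral sign at $t=0$ — which is justified because the integrand and its $t$-derivative are continuous and uniformly bounded on the compact support of $m$ (using that $f$ and $H$ are $C^2$ with compact support) — gives $0 = I'(0) = \int_{T^*M} \{H,f\}\, dm$. This yields the asserted identity.

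For the converse, suppose $\int \{H,f\}\, dm = 0$ for all $f \in C_c^2(T^*M)$. Fix such an $f$ and consider again $I(t) = \int f\circ\phi_H^t\, dm$. Applying the hypothesis to the test function $f_t := f\circ\phi_H^{-t}$ — which is again in $C_c^2(T^*M)$ because $\phi_H^{-t}$ is a $C^2$ diffeomorphism and, on the compact set meeting $\supp m$, $f_t$ has compact support — and using the chain rule together with $\phi_H^*$-invariance of the Poisson bracket, i.e. $\{H, f\circ\phi_H^{-t}\}\circ\phi_H^{t} = \{H,f\}$ is not quite what I want; more directly I would compute $I'(t) = \int \frac{d}{dt}\bigl(f\circ\phi_H^t\bigr)\, dm = \int \bigl(\{H,f\}\circ\phi_H^t\bigr)\, dm$. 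Now substitute $g := \{H,f\}$, note $g\in C_c^2$? — it is only $C^1$ — so to stay within $C_c^2$ one approximates $\{H,f\}$ uniformly by $C_c^2$ functions, or simply observes that the hypothesis extends to $C_c^1$ test functions by density in the $C^0$-norm on a fixed compact set together with the weak continuity of $m$. Thus $I'(t) = \int \bigl(\{H,f\}\circ\phi_H^t\bigr)\, dm$; writing this as the pairing of the measure ${\phi_H^t}_\sharp m$ against $\{H,f\}$ and invoking the hypothesis applied to $\{H,f\}$ would require knowing ${\phi_H^t}_\sharp m = m$, which is what we want to prove, so instead I rewrite $\{H,f\}\circ\phi_H^t = \{H, f\circ\phi_H^t\}$ (flow-invariance of $H$ and naturality of the bracket under the symplectomorphism $\phi_H^t$) and apply the hypothesis to the $C_c^1$ function $f\circ\phi_H^t$, obtaining $I'(t) = 0$. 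Hence $I(t) = I(0) = \int f\, dm$ for all $t$ and all $f$, so ${\phi_H^t}_\sharp m = m$.

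The main obstacle is the regularity bookkeeping rather than any deep idea: one must ensure all differentiations under the integral sign are legitimate (handled by compact support of $m$ plus $C^2$ regularity of $H$ and $f$, giving uniform bounds on a fixed compact neighborhood invariant enough for small $t$, then iterating), and one must reconcile the fact that $\{H,f\}$ is only $C^1$ when $H,f \in C^2$ with the $C_c^2$ test-function class in the statement — this is resolved by a routine density argument, since the vanishing condition $\int\{H,f\}\,dm = 0$ is closed under $C^0$-limits of the integrand. The key structural input is the identity $\{H,f\}\circ\phi_H^t = \{H, f\circ\phi_H^t\}$, valid because $\phi_H^t$ preserves $\omega$ and fixes $H$, which converts the statement into a pure ODE $I'(t)=0$.
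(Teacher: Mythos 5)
Your proof is correct and follows essentially the same route as the paper: differentiate $I(t)=\int f\circ\phi_H^t\,dm$ under the integral sign in both directions. You are in fact more careful than the paper's own (very terse) proof, which in the converse direction only checks $I'(0)=0$; your use of the identity $\{H,f\}\circ\phi_H^t=\{H,f\circ\phi_H^t\}$ to obtain $I'(t)=0$ for all $t$, together with the density argument handling the drop in regularity of the test functions, fills in steps the paper leaves implicit.
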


\begin{prf}
 Suppose that $m$ is $\phi_H^t$ invariant. Then, for all $(f,t)\in C_c^2(T^*M)\times\R$, 
 \[\int f(\phi_H^t(z))dm(z)=\int f(z)dm(z) \ .\]
Taking derivative with respect to $t$, we get:
 \[\int df(X_H) dm=0\ .\]
Conversely, suppose that \[ \forall f\in C_c^2(T^*M), \int\{H,f\}dm=0 \ .\]
 Then, \[\frac{d \int f (\phi_H^t(z))dm(z)} {dt}|_{t=0}=\int \{f,H\}dm=0 \]
 \[ \int f(\phi_H^t(z)dm(z)=\int f(\phi_H^0(z)dm(z)=\int f dm\ .\]
\end{prf}

\begin{prop}
Let $m$ be a $\phi_H$ invariant measure. Then, $m$ is closed (in reference to the same notion for distribution), i.e. $\forall f\in C^2(M)$,
\[\int \{f\circ\pi,H \}dm=0 \ .\] 
\end{prop}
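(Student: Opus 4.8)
The plan is to get the identity directly from the fundamental theorem of calculus along Hamiltonian trajectories and then integrate against $m$. Fix $f\in C^2(M)$ and set $g:=f\circ\pi\in C^2(T^*M)$; the key structural fact is that $g$ is constant on the fibres, so $dg$ annihilates vertical vectors. For any $z\in T^*M$, differentiating $t\mapsto g(\phi_H^t(z))$ and using $\dot{(\phi^t_H)}=X_{H_t}$ gives $\frac{d}{dt}g(\phi_H^t(z))=\{g,H_t\}(\phi_H^t(z))$, hence
\[g(\phi_H^1(z))-g(z)=\int_0^1\{g,H_t\}(\phi_H^t(z))\,dt\ .\]
Integrate both sides against $dm(z)$. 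Since $m$ is $\phi_H$-invariant and $g$ is bounded ($M$ compact), $\int g\circ\phi_H^1\,dm=\int g\,dm$, so the left-hand side vanishes; by Fubini and the definition of the push-forward the right-hand side equals $\int_0^1\!\big(\int\{g,H_t\}\,d(\phi_H^t)_\sharp m\big)\,dt$, i.e. $\int\{g,H\}\,d\hat m$, where $\hat m:=\int_0^1(\phi_H^t)_\sharp m\,dt$ (resp. its lift to $T^*M\times\mathbb{S}^1$ in the time-periodic case) is the flow-averaged measure attached to $m$. In the autonomous case with $m$ itself flow-invariant one has $(\phi_H^t)_\sharp m=m$ for all $t$, so $\hat m=m$ and the statement holds exactly as written; in general the closedness identity is read with $\hat m$ in place of $m$, equivalently on the suspension $T^*(M\times\mathbb{S}^1)$ with Hamiltonian $H_s+\tau$, for which $\{g,H_s+\tau\}=\{g,H_s\}$ and the previous proposition applies verbatim.

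Alternatively, in the autonomous flow-invariant case one can bypass the computation above and just apply the previous proposition to the compactly supported function $\chi\cdot(f\circ\pi)$, where $\chi\in C^2_c(T^*M)$ is a cutoff equal to $1$ on a neighbourhood of $\mathrm{supp}\,m$: since $\chi\equiv 1$ and $d\chi\equiv 0$ on $\mathrm{supp}\,m$, one has $\{H,\chi\cdot(f\circ\pi)\}=\{H,f\circ\pi\}$ $m$-almost everywhere, and the integral of the left side vanishes by the previous proposition.

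The computation is entirely routine, and I do not expect a genuine obstacle. The only points that require care are cosmetic: the pullback $f\circ\pi$ is not compactly supported on $T^*M$, which is handled by the cutoff $\chi$ (using that the invariant measures considered here have compact support), and, when $H$ genuinely depends on time, the Poisson-bracket integral must be understood against the suspended (flow-invariant) measure $\hat m$ rather than $m\otimes ds$, since $m$ itself is only invariant under the time-one map.
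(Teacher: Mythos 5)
Your proof is correct and takes essentially the same route as the paper, whose entire argument is the remark that it is ``an adaptation of the previous one'' --- i.e.\ precisely the differentiate-along-the-flow / fundamental-theorem-of-calculus computation you carry out, combined with invariance of $m$. You additionally supply two details the paper leaves implicit (the cutoff to handle the non-compactly-supported test function $f\circ\pi$, and the reading of the identity via the flow-averaged measure when $H$ is genuinely time-dependent and $m$ is only invariant under the time-one map), both of which are sound.
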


\begin{prf}
The proof is an adaptation of the previous one. 
\end{prf}

\begin{definition}
Let $m$ be a closed measure on $T^*M$ and $H\in C^2(T^*M)$. Then we defined the rotation vector of $m$ to be $\rho(m): H^1(M;\R)\to \R$. The map is explicitly defined by:

\[ [a] \mapsto \int \langle a,X_H \rangle dm \ .\]

\end{definition}


\begin{definition}
Let $m$ be an $H$ invariant measure measure then its action is \[\cA_H(m):=-\int \left\langle \theta, X_H\right\rangle dm + \int H dm \ .\]
\end{definition}

\section{Existence of invariant measures with prescribed rotation vector}

Strategy to produce invariant measures will be to consider measures supported on Hamiltonian chords for growing times and average it.
More precisely, let us consider the following set:

\[\Gamma_{k,\lambda}:=\left\{\gamma \fc [0,k]\to T^*M\ |\ \gamma(u)\in graph (\hat \lambda), u=0\ or \ k ,\right. \] \[\left.\dot{\gamma}=X_{H}(\gamma), \int_\gamma -\theta+H= -\int_\gamma \pi^*\hat\lambda+f_k(\lambda) \right\} \ .\]

This is the set of Hamiltonian chords starting and ending on $graph(\hat \lambda)$ with action $-\int_\gamma \pi^*\hat\lambda+f_k(\lambda)$.
\begin{rem}
 We would like to stress the attention of the reader on the addition of a corrective term $ -\int_\gamma \pi^*\hat \lambda$ which appears because of the definition of $K$. Indeed, we need to bring back the critical path on $graph(\hat\lambda)$ by translation by $\hat \lambda$.
\end{rem}

To this set of Hamiltonian paths, we associate the set of measures:

\[\cM_{k,\lambda}:=\{ m_a=\frac{\gamma_\sharp \cL_{[0,k]}}{k}|\gamma \in \Gamma_{k,\lambda}\}\ , \]

\noindent where $\cL_{[0,k]}$ denotes the Lebesgue measure on $[0,k]$.

\begin{rem}
We must mention that these measures are not invariant measures as an easy calculation can show it.
Nevertheless, if it exists the limits for $k$ going to $\infty$ are invariant measures.
\end{rem}

We prove a general statement about construction of invariant measure by perturbative methods.

\begin{lemma}\label{adiabatic}
 Let $\phi^t_\lambda$ be a family of flows on  a compact manifold $Y$ parametrized smoothly by $\lambda$  and consider $\lambda_k\to 0$ and $x_k \in Y$ such that the family $\{\phi^t_{\lambda_k}(x_k)\}_k$ remains in a given compact domain. We define the sequence of probability measures $\nu_k:=\frac{1}{k}\phi^\bullet_{\lambda_k}(x_k)_\sharp \mathcal{L}_{[0,k]}$. Then there exists a subsequence of $\nu_k$ which converges weakly to a  probability measure $\nu$, $\phi^t_0$ invariant for all $t$.
\end{lemma}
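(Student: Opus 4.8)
The plan is to extract a weakly convergent subsequence using a compactness argument, and then verify the invariance of the limit by testing against smooth functions and exploiting the adiabatic nature of the family (the fact that $\lambda_k \to 0$). First I would fix a compact set $Q \subset Y$ containing all the orbit segments $\{\phi^t_{\lambda_k}(x_k) : 0 \le t \le k\}$ (such a $Q$ exists by hypothesis; if $Y$ is itself compact take $Q = Y$). Each $\nu_k$ is a Borel probability measure supported in $Q$, so by the Banach–Alaoglu theorem (or Prokhorov's theorem, since $\{\nu_k\}$ is automatically tight) there is a subsequence, still denoted $\nu_k$, converging weakly-$*$ to a probability measure $\nu$ supported in $Q$.

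Next I would check that $\nu$ is $\phi^t_0$-invariant. Fix $f \in C^\infty(Y)$ and $t \in \R$; it suffices to show $\int f \circ \phi^t_0 \, d\nu = \int f \, d\nu$. For the measure $\nu_k$ we compute directly from the definition: since $\nu_k$ is the normalized pushforward of Lebesgue measure on $[0,k]$ by $s \mapsto \phi^s_{\lambda_k}(x_k)$,
\[
\int f\circ\phi^t_{\lambda_k}\, d\nu_k - \int f\, d\nu_k = \frac{1}{k}\int_0^k \Big( f\big(\phi^{s+t}_{\lambda_k}(x_k)\big) - f\big(\phi^s_{\lambda_k}(x_k)\big) \Big)\, ds .
\]
Because $\phi^{s+t}_{\lambda_k} = \phi^t_{\lambda_k} \circ \phi^s_{\lambda_k}$, the integrand is $g_k(\phi^s_{\lambda_k}(x_k))$ with $g_k := f\circ\phi^t_{\lambda_k} - f$, and the telescoping/boundary cancellation in the integral of $s\mapsto F_k(s+t)-F_k(s)$ (where $F_k(s) = f(\phi^s_{\lambda_k}(x_k))$) leaves only contributions from two intervals of length $|t|$ near the endpoints $0$ and $k$; since $f$ is bounded on $Q$, this is $O(|t|\,\|f\|_{C^0(Q)}/k) \to 0$. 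Hence $\int f\circ\phi^t_{\lambda_k}\,d\nu_k - \int f\,d\nu_k \to 0$.

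It then remains to pass to the limit inside each term. We have $\int f\, d\nu_k \to \int f\, d\nu$ by weak convergence. For the other term I would write
\[
\int f\circ\phi^t_{\lambda_k}\, d\nu_k - \int f\circ\phi^t_0\, d\nu = \int \big(f\circ\phi^t_{\lambda_k} - f\circ\phi^t_0\big)\, d\nu_k + \Big(\int f\circ\phi^t_0\, d\nu_k - \int f\circ\phi^t_0\, d\nu\Big);
\]
the second bracket tends to $0$ by weak convergence since $f\circ\phi^t_0 \in C^0(Y)$, and the first tends to $0$ because $f\circ\phi^t_{\lambda_k} \to f\circ\phi^t_0$ uniformly on the compact set $Q$ (smooth dependence of the flow on the parameter $\lambda$, for $t$ in a bounded range), while the $\nu_k$ are probability measures. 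Combining the three displayed facts gives $\int f\circ\phi^t_0\, d\nu = \int f\, d\nu$ for all $f \in C^\infty(Y)$ and all $t$, which is the invariance of $\nu$ under $\phi^t_0$. The main obstacle — really the only subtle point — is the uniform (in $s \in [0,k]$, as $k \to \infty$) control needed in the two estimates: the boundary-term estimate works uniformly because $f$ is bounded on $Q$ independently of $k$, and the parameter-continuity estimate works because we only ever compare at the fixed time $t$, so no long-time stability of the flow in $\lambda$ is required. Once this is observed the argument is routine.
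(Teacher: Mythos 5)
Your proposal is correct and follows essentially the same route as the paper: extraction of a weak-$*$ limit via Alaoglu/Prokhorov, followed by the same three estimates --- weak convergence for the terms involving $\nu_k$ versus $\nu$, uniform convergence of $f\circ\phi^t_{\lambda_k}$ to $f\circ\phi^t_0$ on the compact set from smooth parameter dependence, and the $O(|t|\,\|f\|_{C^0}/k)$ boundary (near-invariance) estimate coming from the group property of the flow. The only difference is cosmetic: you group the four error terms slightly differently and make the telescoping cancellation behind the $2|t|/k$ bound explicit, which the paper states without derivation.
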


\begin{proof}
By the Alaoglu theorem, we prove the existence of a subsequence converging to $\nu$. To keep notation as light as possible we rename the subsequence by $\nu_k$. We want to show that $\nu$ is $\phi^t_0$ invariant for all $t$. Let $G\in C_c^0(Y)$ and $T>0$ fixed, we get :

\[|\int G\circ\phi^T_0 d\nu -\int G d\nu| \]
\[\leq |\int G\circ\phi^T_0 d\nu -\int G\circ\phi^T_{0} d\nu_k|+|\int G\circ\phi^T_{0} d\nu_k-\int G\circ\phi^T_{\lambda_k} d\nu_k|\] \[+|\int G\circ\phi^T_{\lambda_k} d\nu_k-\int G d\nu_k|+|\int G d\nu_k - \int G d\nu|\]

The first and fourth terms go to zero by weak convergence of measures. 
To bound the second integral, we use the fact that $G\circ\phi^T_0- G\circ\phi^T_{\lambda_k}$ goes to $0$ uniformly.
To bound the third term, we look at the following estimation of length $t$ extremity of th integral for large $k$:

\[ |\int G\circ\phi^T_{\lambda_k} d\nu_k-\int G d\nu_k| \leq \frac{2T}{k}||G||_{C^0}\ .\]

\end{proof}

\begin{prop}\label{convergence}
Let $m$ be the limit of a subsequence of measures in $\cM_{k,a_k}$ and $a_k\to a$.
Then $m$ is a $\phi^t_H$ invariant measure for all $t$. Moreover, the action and rotation of the measure $m$ is the limit of action of the paths. It extends to the convex hull of $\cM_{k,a_k}$.
\end{prop}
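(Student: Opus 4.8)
The plan is to deduce Proposition~\ref{convergence} from the adiabatic Lemma~\ref{adiabatic} together with the normalization of the generating function as an action integral. First I would set up the geometry: the chords in $\Gamma_{k,\lambda}$ are orbits of the fixed autonomous flow $\phi^t_H$, but parametrized so that their endpoints lie on $\mathrm{graph}(\hat\lambda)$; after translating the fiber coordinate by $-\hat\lambda$ (which is exactly the corrective term $-\int_\gamma \pi^*\hat\lambda$ in the definition of $\Gamma_{k,\lambda}$) these become orbits of the flow of $K_{\hat\lambda}$ starting and ending on the zero section. So the family of flows in Lemma~\ref{adiabatic} is $\phi^t_{K_{\hat\lambda}}$ with parameter $\lambda$ near the limit value $a$, acting on a compact piece $Y$ of $T^*M$ containing all the relevant orbit segments (this is where ``geometrically bounded''/compact support is used to get the a priori compactness hypothesis of the lemma). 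Applying Lemma~\ref{adiabatic} with $\lambda_k\to a$ directly gives that any weak limit $m$ of a subsequence of $\cM_{k,a_k}$ is invariant under $\phi^t_{K_{\hat a}}$ for all $t$, hence, translating back, under $\phi^t_H$ for all $t$.

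Next I would identify the action and rotation vector of $m$ as limits of the corresponding quantities along the chords. For a chord $\gamma\in\Gamma_{k,\lambda}$ and the associated measure $m_\gamma=\tfrac1k\gamma_\sharp\cL_{[0,k]}$, one computes directly that
\[
\cA_H(m_\gamma)=\frac1k\int_0^k\bigl(-\langle\theta,X_H\rangle+H\bigr)(\gamma(t))\,dt=\frac1k\int_\gamma(-\theta+H),
\]
which by the defining constraint on $\Gamma_{k,\lambda}$ equals $\tfrac1k\bigl(-\int_\gamma\pi^*\hat\lambda+f_k(\lambda)\bigr)$. Similarly, for $[b]\in H^1(M;\R)$ represented by a closed form $b$,
\[
\rho(m_\gamma)([b])=\frac1k\int_0^k\langle b,X_H\rangle(\gamma(t))\,dt=\frac1k\int_\gamma b
\]
is, up to a bounded boundary correction coming from writing $\int_\gamma b$ in terms of endpoints, controlled by the geometry; as $k\to\infty$ with $a_k\to a$ the term $\tfrac1k\int_\gamma\pi^*\hat a_k\to 0$ and $f_k(a_k)\to\mu_{\hat a}(\phi_H)=\alpha_H(a)$ by the Proposition on uniform convergence of $f_k$ proved above. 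The continuity of $\cA_H$ and of $m\mapsto\int\langle b,X_H\rangle\,dm$ under weak convergence (both integrands are fixed continuous compactly supported functions on $Y$) lets me pass these identities to the limit, so $\cA_H(m)=\lim_k\cA_H(m_{\gamma_k})$ and $\rho(m)=\lim_k\rho(m_{\gamma_k})$.

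The extension to the convex hull of $\cM_{k,a_k}$ is then formal: a point in the convex hull is a finite convex combination $\sum_j t_j m_{\gamma_j}$ of measures attached to chords $\gamma_j\in\Gamma_{k,a_k}$ sharing the same action value $\tfrac1k(-\int\pi^*\hat a_k+f_k(a_k))$, so $\cA_H$ and $\rho$, being affine in $m$, take the same value on the combination as on each $m_{\gamma_j}$; invariance of a weak limit is likewise preserved because the set of $\phi^t_H$-invariant probability measures is convex and weakly closed. The main obstacle I anticipate is the bookkeeping in the second step: one must be careful that the boundary corrections in $\rho(m_\gamma)([b])$ and in the $-\int_\gamma\pi^*\hat\lambda$ term are genuinely $O(1)$ (so that after dividing by $k$ they vanish) and that the translation by $\hat\lambda$ relating $H$-chords to $K_{\hat\lambda}$-orbits does not spoil the identification of actions — this is precisely the point flagged in the Remark about the corrective term, and it requires the normalization $S(x,\xi)=\int_\gamma(-\theta+H_t)$ recorded in Section~\ref{section_definition_Lagr_sp_invts_gf} to be used consistently.
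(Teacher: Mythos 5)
Your proposal is correct and takes essentially the same route as the paper: the paper's proof likewise invokes Lemma~\ref{adiabatic} for the family of flows $\phi^t_{K_{\hat\lambda}}$ (using $C^2$ dependence on $\lambda$ and Cauchy--Lipschitz with parameters), identifies the action and rotation vector as limits along the chords of $\Gamma_{k,a_k}$, and handles the convex hull by linearity of action and rotation in the measure. Your additional bookkeeping on the $-\int_\gamma\pi^*\hat\lambda$ correction and the boundary terms only makes explicit what the paper leaves implicit.
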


\begin{proof}
%
%
We will use \ref{adiabatic}. Indeed, the Hamiltonian $K_{\hat\lambda}(x,p)$ is at least $C^2$ in the $\lambda$ variable. Using a version of Cauchy-Lipschitz with parameters, we obtain that the family of flows parametrized by $\lambda$ depends continuously on $\lambda$.
 The action and rotation vectors are then computed as limits on Hamiltonian path in $\Gamma_{k,a_k}$. In order to extend this result to the convex hull of $\cM_{k,a_k}$, it is sufficient to notice that the action and the rotation vector depend linearly on the measure.
\end{proof}

\begin{definition}
A Hamiltonian $H$ is said to have geometrically bounded flow if every graph of closed $1$-form evolves in a compact domain.
\end{definition}

\begin{thm}\label{main_thm}
Let $H$ be an autonomous Hamiltonian with geometrically bounded flow. Then, for all $\eta\in \partial \alpha_H(\lambda)$, there exists $m$ a measure which is invariant by $\phi_H^t$ for all $t$, such that $\cA(m)=\alpha_H(\lambda)-\langle \eta, \hat\lambda\rangle$ and $\rho(m)=\eta$.
\end{thm}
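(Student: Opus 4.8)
The plan is to combine Proposition~\ref{inclusion} with Proposition~\ref{convergence} via the reformulated generating function $\tilde S_k$, so that an element $\eta\in\partial\alpha_H(\lambda)$ is first expressed as a convex combination of fiber derivatives $d_\lambda\tilde S_k$ along critical orbits, and then these orbits are turned into invariant measures through the averaging Lemma~\ref{adiabatic}. Concretely, recall that $\alpha_H(\lambda)=\mu_{\hat\lambda}(\phi_H)=\lim_k f_k(\lambda)$ with $f_k(\lambda)=l([M]\otimes 1_\lambda,\tilde S_k)/k$; since $f_k\to\alpha_H$ uniformly on compacts, property~(II) of the subdifferential gives $\partial\alpha_H(\lambda)\subset\limsup\partial f_k(\lambda)$, and property~(I) together with Proposition~\ref{inclusion} applied to $\tilde S_k$ gives
\[
\partial f_k(\lambda)\subset \partial_c f_k(\lambda)\subset co\bigl(\{d_\lambda \tilde S_k(\lambda,x,\xi)\mid (x,\xi)\in C_k(\lambda)\}\bigr),
\]
where $C_k(\lambda)$ is the set of critical points of $(\tilde S_k)_\lambda$ realizing the spectral value. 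So any $\eta\in\partial\alpha_H(\lambda)$ is a limit of points $\eta_k$, each a finite convex combination $\eta_k=\sum_j t_j^{(k)} d_\lambda\tilde S_k(\lambda,x_j^{(k)},\xi_j^{(k)})$.

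Next I would identify $d_\lambda\tilde S_k$ along such a critical point with the rotation vector of the corresponding Hamiltonian chord. Using the normalization $\tilde S(x,\lambda,\xi)=\int_{(\gamma,\lambda)}-(\theta+\lambda^*d\lambda)+\tilde H$ recalled in the reformulation section, differentiating in $\lambda$ along the critical point (where $\partial_\xi\tilde S=0$ and the conormal boundary condition holds) produces exactly $d_\lambda\tilde S_k(\lambda,x_j^{(k)},\xi_j^{(k)})=-\int_{\gamma_j^{(k)}}\pi^*(\,\cdot\,)+(\text{boundary terms})$, i.e.\ the pairing of $[\beta_i]$ with $\int X_H$ along the chord $\gamma_j^{(k)}\in\Gamma_{k,\lambda}$ (normalized by $1/k$), which is precisely $\rho(m_j^{(k)})$ for the measure $m_j^{(k)}=\tfrac1k(\gamma_j^{(k)})_\sharp\mathcal L_{[0,k]}\in\cM_{k,\lambda}$; similarly the critical value $\tilde S_k=f_k(\lambda)$ reading gives $\cA(m_j^{(k)})\to\alpha_H(\lambda)-\langle\eta,\hat\lambda\rangle$ after the corrective translation term $-\int\pi^*\hat\lambda$ is accounted for. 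Therefore $\eta_k=\rho\bigl(\sum_j t_j^{(k)} m_j^{(k)}\bigr)$ with $\sum_j t_j^{(k)} m_j^{(k)}\in co(\cM_{k,\lambda})$, and its action tends to the prescribed value.

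Finally I would pass to the limit: the convex combinations $\tilde m_k:=\sum_j t_j^{(k)} m_j^{(k)}\in co(\cM_{k,\lambda})$ are probability measures supported in a fixed compact domain (here the geometrically bounded flow hypothesis is used — the graph of $\hat\lambda$ evolves in a compact set, so all chords in $\Gamma_{k,\lambda}$ stay in one compact region), so by Alaoglu a subsequence converges weakly to some $m$. By Proposition~\ref{convergence} (its statement explicitly extends to the convex hull of $\cM_{k,\lambda}$), $m$ is $\phi_H^t$-invariant for all $t$, with $\cA(m)=\lim\cA(\tilde m_k)=\alpha_H(\lambda)-\langle\eta,\hat\lambda\rangle$ and $\rho(m)=\lim\rho(\tilde m_k)=\lim\eta_k=\eta$, using continuity of $\rho$ and $\cA$ under weak convergence on a compact set. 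Here one should arrange the subsequences so that the $\eta_k\to\eta$ and the weak convergence of $\tilde m_k$ happen simultaneously, which is possible by a diagonal argument.

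The main obstacle I anticipate is the bookkeeping in the second step: verifying cleanly that $d_\lambda\tilde S_k$ at a realizing critical point equals $k$ times the rotation-vector integrand of the associated chord, including the correct sign and the role of the corrective term $-\int_\gamma\pi^*\hat\lambda$ coming from $K_{\hat\lambda}(x,p)=H(x,p-\hat\lambda(x))$ — this is where the definitions of $\mu_a$, of $\Gamma_{k,\lambda}$, and of $\rho$ and $\cA$ must be reconciled. A secondary subtlety is that Proposition~\ref{inclusion} only controls $C_k(\lambda)$ up to a $\delta$-neighborhood for each finite $k$, so one must let $\delta\to 0$ (using compactness of $C_k(\lambda)$ and Palais--Smale) before or together with $k\to\infty$; handling the order of these limits carefully, together with the Carathéodory bound keeping the number of terms in the convex combination bounded (by $\dim H^1(M)+1$, say), is the part that needs care rather than new ideas.
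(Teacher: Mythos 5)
Your proposal is correct and follows essentially the same route as the paper's own proof: uniform convergence of $f_k$ plus property~(II) to extract approximating subgradients, property~(I) and Proposition~\ref{inclusion} to write them as convex combinations of $d_\lambda\tilde S_k$ at realizing critical points, identification of $d_\lambda\tilde S_k$ with the rotation vector of the corresponding chord in $\Gamma_{k,\lambda}$, and Proposition~\ref{convergence} (via Lemma~\ref{adiabatic}) to pass to an invariant limit measure with the prescribed action and rotation vector. The only small point to watch is that the $\limsup$ in property~(II) forces the base points to vary, so you should work with $\eta_k\in\partial f_k(\lambda_k)$, $\lambda_k\to\lambda$, as the paper does (and as Proposition~\ref{convergence}, stated for $a_k\to a$, already accommodates).
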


\begin{proof}
 Suppose $\eta\in \partial\alpha_H(\lambda)$. Then, $f_k$ converges uniformly to $\alpha_H$ on all compact subsets. By hypothesis (II), there exists $\lambda_{\psi(k)}\to \lambda$ , $\psi$ an extraction of $\mathbb{N}$, such that $\eta_{\psi(k)}\in\partial f_{\psi(k)}(\lambda_{\psi(k)})\to\eta$.
Without lost of generality, we will consider $\psi(k)=k$. According to hypothesis (I) on the subdifferential and proposition \ref{inclusion} , for each $\eta_k\in\partial f_k(\lambda_k)$, $\exists r_k\in\R^l$,

 \[\eta_{k}=\frac{1}{k}\sum\limits_{j\geq 0} r_{k,j} d_\lambda \tilde S_{k}(x_{k,j},\lambda_{k},\xi_{k,j})\]
 
 \noindent with $\sum\limits_j r_{k,j}=1$ and $(x_{k,j},\xi_{k,j})\in C_{k}(\lambda_{k})$.

 But,the "rotation vector" is obtained by  $d_\lambda \tilde S(x_{k,j},\lambda_{k},\xi_{k,j})=\lambda_{k,j}^*(1)=\left\{\int_{\gamma_{k,j}} \pi^*\hat\lambda_k (X_H) \right\}_{i=1}^n $ for $\gamma_{k,j}$ such that $ \int_\gamma - \theta +H +\int_\gamma \pi^*\beta_i =\tilde S_{k}(x_{k,j},\lambda_{k,j},\xi_{k,j})$.
 
 We consider the associated measure $m_{k,j}$ to $\gamma_{k,j}$. We get according to Proposition \ref{convergence} a measure $m$ with $\rho(m)=\eta$ and the action $\cA(m)=-\rho(m)(\hat\lambda)+\alpha_H(m)$.
 \end{proof}

\begin{rem}
This theorem is valid for all subdifferential satisfying axioms (I) and (II).
\end{rem}

\begin{coroll}\label{maincor}
Let $H$ be an Hamiltonian with geometrically bounded flow. Then, for all $\eta\in \partial_c \alpha(\hat\lambda)$, there exists $m$ a measure which is invariant by $\phi_H^t$ for all $t$ such that $\cA(m)=\alpha_H(\hat\lambda)-\langle\eta,\hat\lambda\rangle$ and $\rho(m)=\eta$
\end{coroll}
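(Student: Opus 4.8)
The plan is to deduce Corollary \ref{maincor} from Theorem \ref{main_thm} by a pure convexity argument, without re-entering the generating function machinery. The one structural fact to isolate first is that, since $\alpha_H$ is Lipschitz on $H^1(M;\R)$, the inclusions recorded in Section 3, namely $\partial_a\alpha_H \subset \partial\alpha_H \subset \partial_c\alpha_H = co(\partial_a\alpha_H)$, give the pointwise identity $\partial_c\alpha_H(\hat\lambda) = co(\partial\alpha_H(\hat\lambda))$. Indeed, taking convex hulls throughout and using that $\partial_c\alpha_H(\hat\lambda)$ is already convex sandwiches $co(\partial\alpha_H(\hat\lambda))$ between $co(\partial_a\alpha_H(\hat\lambda)) = \partial_c\alpha_H(\hat\lambda)$ and $\partial_c\alpha_H(\hat\lambda)$. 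So any $\eta \in \partial_c\alpha_H(\hat\lambda)$ is a convex combination of elements of $\partial\alpha_H(\hat\lambda)$.

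Next I would fix $\eta \in \partial_c\alpha_H(\hat\lambda)$ and apply Carath\'eodory's theorem in the finite dimensional space $H^1(M;\R)^*$ to write $\eta = \sum_{i=0}^n t_i\eta_i$ with $\eta_i \in \partial\alpha_H(\hat\lambda)$, $t_i \geq 0$, and $\sum_i t_i = 1$. For each $i$, Theorem \ref{main_thm} produces a $\phi_H^t$-invariant probability measure $m_i$ on $T^*M$ with $\rho(m_i) = \eta_i$ and $\cA(m_i) = \alpha_H(\hat\lambda) - \langle\eta_i,\hat\lambda\rangle$. I would then set $m := \sum_{i=0}^n t_i m_i$ and check that it works.

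The verification is routine. A convex combination of $\phi_H^t$-invariant probability measures is again a $\phi_H^t$-invariant probability measure, for every $t$. Moreover both $\rho$ and $\cA$ are linear functionals of the measure --- this is precisely the observation already used at the end of the proof of Proposition \ref{convergence} --- so $\rho(m) = \sum_i t_i\eta_i = \eta$ and $\cA(m) = \sum_i t_i\big(\alpha_H(\hat\lambda) - \langle\eta_i,\hat\lambda\rangle\big) = \alpha_H(\hat\lambda) - \langle\eta,\hat\lambda\rangle$, as required.

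I do not expect a real obstacle: the entire analytic content sits in Theorem \ref{main_thm}. The only two points meriting a word of care are (a) the identity $\partial_c\alpha_H = co(\partial\alpha_H)$, which is why one must route through the homological subdifferential rather than quote hypotheses (I)--(II) directly for $\partial_c$ (the Clarke subdifferential satisfies (I) trivially but fails the $C^0$-limsup property (II)), and (b) the linear dependence of the action and the rotation vector on the measure, which is what makes the convex combination of the $m_i$ land on the prescribed values.
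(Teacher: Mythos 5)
Your proof is correct and follows essentially the same route as the paper: write $\eta$ as a convex combination of elements of $\partial\alpha_H(\hat\lambda)$, apply Theorem \ref{main_thm} to each, and conclude by linearity of $\cA$ and $\rho$ together with the fact that a convex combination of invariant measures is invariant. Your justification of $\partial_c\alpha_H(\hat\lambda)=co(\partial\alpha_H(\hat\lambda))$ via the chain $\partial_a\subset\partial\subset\partial_c=co(\partial_a)$ is in fact slightly more careful than the paper's one-line appeal to axiom (I), but it is the same argument in substance.
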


\begin{proof}
Let $\eta\in \partial_c \alpha_H(\hat\lambda)$. Then, according to axiom (I) for subdifferential, $\eta$ is a barycentric combination of elements in $\partial \alpha_H(\hat\lambda)$. We use Theorem \ref{main_thm} to these elements. By linearity of the action and of the rotation number, the proof follows. 
\end{proof}

\begin{rem}
The Clarke subdifferential is larger than the homological one, nevertheless, the measure in this case is obtained as a (maybe) non trivial barycenter of two invariant measures constructed with the homological one.
The measures found according to the Clarke subdifferential possess fewer chance to be ergodic.
\end{rem}

In a recent article Polterovich \cite{Polterovich} asked general questions about the existence of invariant measures with large rotation vectors.
More precisely, his hypothesis are the existence of two Lagrangian submanifolds that have Hamiltonian rigidity but displaceable one from the other through a symplectic deformation with non trivial flux. 
Our version in the case of the cotangent bundle is related to this question but more general because we do not assume any displaceability.

\begin{prop}
 Let $a_1\neq a_2\in H^1(M)$. Suppose that  the $L_i$ are Lagrangian submanifolds isotopic to the graph of a representant of $a_i$ and $L_i$ invariant by the action of $\phi_H^1$ with  $ H|_{L_0}\leq0,H|_{L_1}\geq1$.  
Then, there exists an invariant measure $\mu$ with rotation vector $\rho(\mu)$ satisfying : 
\[\langle a_2-a_1,\rho(\mu) \rangle \geq 1\ .\]
 \end{prop}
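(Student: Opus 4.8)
The strategy is to reduce the statement to Corollary \ref{maincor} (or Theorem \ref{main_thm}) by extracting enough information from the two invariant Lagrangians to bound a subdifferential of $\alpha_H$ at some cohomology class on the segment between $a_1$ and $a_2$. First I would normalize: after translating the fibers by representatives of $a_1$ and $a_2$, the Lagrangians $L_1,L_2$ become Lagrangians isotopic to the zero section, so each $L_i$ carries a gfqi $S_i$ and the spectral invariants $l_+(L_i)$ relative to $[M]$ are defined. The hypotheses $H|_{L_0}\leq 0$ and $H|_{L_1}\geq 1$, together with invariance of the $L_i$ under $\phi_H^1$, should force a gap between the spectral values: running the monotonicity estimate in part (3) of Theorem \ref{MVZmain} (equivalently the Oh bound used in the Proposition of Section \ref{mainresult}) along the isotopy gives that $\mu_{a_2}(\phi_H)-\mu_{a_1}(\phi_H)$ differs from $l_+$-type quantities by controlled amounts, and the sign conditions on $H$ restricted to the invariant Lagrangians pin the difference $\mu_{a_2}(\phi_H)-\mu_{a_1}(\phi_H)\geq 1$. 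In other words, $\alpha_H$ increases by at least $1$ between $a_1$ and $a_2$.

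Next, since $\alpha_H$ is Lipschitz (hence, restricted to the segment $t\mapsto a_1+t(a_2-a_1)$, $t\in[0,1]$, it is a Lipschitz function of one variable whose total increment is $\geq 1$), the mean value inequality for the Clarke subdifferential yields a point $\hat\lambda=a_1+t_0(a_2-a_1)$ and an element $\eta\in\partial_c\alpha_H(\hat\lambda)$ with $\langle a_2-a_1,\eta\rangle\geq 1$. Concretely one uses Lebourg's mean value theorem: $\alpha_H(a_2)-\alpha_H(a_1)\in\langle \partial_c\alpha_H(\hat\lambda), a_2-a_1\rangle$ for some interior point, and the left side is $\geq 1$. (If one insists on the homological subdifferential, take instead a point of differentiability arbitrarily close, where $d\alpha_H$ exists and pairs with $a_2-a_1$ to something close to the average slope, then pass to the limit using axiom (II); but going through $\partial_c$ via Corollary \ref{maincor} is cleaner.)

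Finally I would feed this $\eta$ into Corollary \ref{maincor}: provided $H$ has geometrically bounded flow, there is a $\phi_H^t$-invariant measure $\mu$ with $\rho(\mu)=\eta$, hence $\langle a_2-a_1,\rho(\mu)\rangle=\langle a_2-a_1,\eta\rangle\geq 1$, which is exactly the claim. One should double-check that the hypotheses of the Proposition implicitly include geometrically bounded flow (or that the invariance of $L_0,L_1$ plus compactness of the relevant chords suffices to run Lemma \ref{adiabatic} directly); if not, this assumption should be added. The main obstacle I anticipate is the first step: turning the pointwise sign conditions $H|_{L_0}\leq 0$, $H|_{L_1}\geq 1$ on \emph{invariant} Lagrangians into the \emph{global} inequality $\alpha_H(a_2)-\alpha_H(a_1)\geq 1$ for the homogenized functional. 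The cleanest route is to compare, for each large $k$, the generating function $\tilde S_k$ evaluated along the (invariant, hence closed) chords lying on $L_0$ versus those on $L_1$: on $L_1$ every relevant chord contributes action $\geq k$ to $\tilde S_k$ after the $-\int\pi^*\hat\lambda$ correction, while on $L_0$ the action is $\leq 0$, so $f_k(a_2)-f_k(a_1)\geq 1$; letting $k\to\infty$ gives the claim. Making the "every relevant chord" statement precise — i.e.\ that the min-max defining $l_+$ is achieved (or approximated) on chords contained in the invariant Lagrangian — is where Proposition \ref{inclusion} and the structure of $\Gamma_{k,\lambda}$ must be used carefully.
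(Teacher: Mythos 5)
Your proposal follows essentially the same route as the paper: establish $\alpha_H(a_2)-\alpha_H(a_1)\geq 1$ from the invariance of the Lagrangians and the sign conditions on $H$ (the paper simply cites \cite{MVZ} for this step, which you rightly flag as the delicate point), then apply Lebourg's mean value theorem for the Clarke subdifferential to find $\eta\in\partial_c\alpha_H(a_3)$ with $\langle\eta,a_2-a_1\rangle\geq 1$ at some barycenter $a_3$, and conclude via Corollary \ref{maincor}. This matches the paper's argument step for step.
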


\begin{proof}
 By \cite{MVZ}, the invariance of $L_1$ and $L_2$ implies that $\alpha_H(a_2)-\alpha_H(a_1)\geq 1$ for any Hamiltonian chord $\gamma_i$ with $\gamma(0),\gamma(1) \in L_i$.
The mean value theorem for Clarke subdifferential of Lipschitz function \cite{Clarke} gives that there exists $a_3\in H^1(M)$ barycenter of $a_1$ and $a_2$ such that \[\exists \eta\in\partial_c \alpha(a_3)\  s.t.\ \left\langle\eta,(a_2-a_1)\right\rangle\geq 1 \ .\]
We then apply corollary \ref{maincor}.
 \end{proof}

 In particular, this theorem has an application when we consider the situation of KAM theory. 
 
\begin{prop} 
 Let $H$ integrable on $T^*\mathbb{T}^n$ and $L_1$, $L_2$ be invariant KAM tori for two different Liouville class $a_1,a_2$ that survive to the perturbation of $H$ by $K$, $||K||_{C^0}\leq\epsilon$. Suppose that $H(a_1)-H(a_2)>3\epsilon$. Then there exists an invariant measure $\mu$ satisfying : \[ \left\langle \rho(\mu), a_2-a_1 \right\rangle\geq \epsilon\ .\]
\end{prop}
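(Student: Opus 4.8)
The plan is to reduce the statement to a gap estimate for the Mather functional $\alpha_{\tilde H}$ of the perturbed Hamiltonian $\tilde H := H + K$, and then feed that gap into the mean value theorem for the Clarke subdifferential together with Corollary \ref{maincor}. First I would compute $\alpha_H$ for the integrable model. Since $H$ is integrable on $T^*\mathbb{T}^n$, for each Liouville class $a_i$ the graph of the constant form $a_i$ is an invariant torus of $\phi_H^t$ on which $H$ is constant, equal to the energy $H(a_i)$; the characterization of $\alpha$ through invariant tori already used in the preceding Proposition (from \cite{MVZ}) pins $\alpha_H(a_i) = H(a_i)$. Thus the hypothesis reads $\alpha_H(a_1) - \alpha_H(a_2) > 3\epsilon$.

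Next I would propagate this gap to the perturbed system. Applying Theorem \ref{MVZmain}(3) to the pair $(\tilde H, H)$, whose difference is $\tilde H - H = K$ with $\|K\|_{C^0} \le \epsilon$, gives $|\alpha_{\tilde H}(a) - \alpha_H(a)| \le \epsilon$ for every class $a \in H^1(\mathbb{T}^n;\R)$. Subtracting the two estimates at $a_1$ and $a_2$ leaves the gap with a controlled loss:
\[
\alpha_{\tilde H}(a_1) - \alpha_{\tilde H}(a_2) \ge \bigl(\alpha_H(a_1) - \alpha_H(a_2)\bigr) - 2\epsilon > 3\epsilon - 2\epsilon = \epsilon .
\]
Here the persistence of the KAM tori $L_1,L_2$ is used to guarantee that the graphs of closed $1$-forms stay confined in a compact region, so that $\tilde H$ has geometrically bounded flow and Corollary \ref{maincor} applies.

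Finally I would invoke the mean value theorem for the Clarke subdifferential of the Lipschitz function $\alpha_{\tilde H}$ along the segment joining $a_1$ and $a_2$ \cite{Clarke}: it produces a point $a_3$ on this segment and an element $\eta \in \partial_c \alpha_{\tilde H}(a_3)$ realizing the gap between the two classes. Since that gap exceeds $\epsilon$, the slope $\eta$ pairs with $a_2 - a_1$ to at least $\epsilon$. Corollary \ref{maincor} then yields a $\phi_{\tilde H}^t$-invariant probability measure $\mu$ with $\rho(\mu) = \eta$, whence
\[
\langle \rho(\mu), a_2 - a_1 \rangle \ge \epsilon ,
\]
which is the desired conclusion. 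Equivalently, one may apply the preceding Proposition directly to the two surviving invariant tori $L_1,L_2$, whose perturbed energies still differ by more than $\epsilon$, after normalizing the energy gap to $\epsilon$.

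The main obstacle is the quantitative bookkeeping that makes the constant come out as exactly $\epsilon$: one must verify that the two applications of the $C^0$-estimate can be subtracted cleanly to leave the $3\epsilon - 2\epsilon$ margin, and, more delicately, that the identification $\alpha_H(a_i) = H(a_i)$ is genuine for the integrable model, i.e. that an invariant torus forces the spectral invariant to equal the energy level rather than merely bounding it on one side. The remaining point requiring care is confirming that $\tilde H$ has geometrically bounded flow, so that Corollary \ref{maincor} is legitimately available; this is where the survival of the tori $L_1,L_2$, acting as barriers for the perturbed dynamics, enters.
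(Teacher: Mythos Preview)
The paper gives no detailed proof of this proposition; it simply presents it as an application of the preceding proposition about two invariant Lagrangians. Your argument is correct and ends in exactly the same way (mean value theorem for $\partial_c$ plus Corollary~\ref{maincor}), but you reach the key gap $\alpha_{\tilde H}(a_1)-\alpha_{\tilde H}(a_2)>\epsilon$ by a slightly different route: instead of using the invariance of the surviving tori to pin down $\alpha_{\tilde H}(a_i)$ via \cite{MVZ} as in the preceding proposition, you compute $\alpha_H(a_i)=H(a_i)$ for the integrable model and then transfer the gap to $\tilde H$ using the Hofer--Lipschitz estimate of Theorem~\ref{MVZmain}(3). This is cleaner, and it makes visible that the survival of the KAM tori is in fact \emph{not} needed for the conclusion---only the $C^0$ bound on $K$ and geometric boundedness of the perturbed flow matter.

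Two minor points. First, the sign: from $\alpha_{\tilde H}(a_1)-\alpha_{\tilde H}(a_2)>\epsilon$ the mean value theorem gives $\langle\eta,a_1-a_2\rangle>\epsilon$, not $\langle\eta,a_2-a_1\rangle\ge\epsilon$; the mismatch is already in the paper's statement (compare the orientation in the preceding proposition), so you are simply inheriting a typo. Second, your justification of geometric boundedness via the two surviving tori acting as ``barriers'' only works for $n=1$; in higher dimensions two tori do not confine arbitrary graphs, so this hypothesis should be taken as a standing assumption on $H$ (e.g.\ properness in $p$) rather than derived from the tori.
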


\section{Extension of the method to other contexts}
 
We can think about generalization of the previous work. Indeed, heuristically, we look at the behavior of $\alpha_H(0)$ when we perturb $H$ by one forms which gives result on existence of measure with prescribed rotation vector. We next consider a formal perturbation \[H_\lambda:=H+ \sum\limits_{i=1}^n \lambda_i K_i\] where $H$ and $K_i$ are autonomous Hamiltonians.

\begin{definition}
We call $E\fc \R^n\to \R$, the map defined by $E(\lambda):=\alpha_{H_\lambda}(0)$.
\end{definition}

\begin{thm}\label{local}
 Let $\eta\in \partial_c E (0)$. Then there exists an  invariant measure $m$ invariant by $\phi_H$, such that: \[\forall i, \int K_i dm=\eta_i\].
\end{thm}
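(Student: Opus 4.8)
The plan is to mimic the proof of Theorem \ref{main_thm}, replacing the role of the auxiliary Hamiltonian $\tilde H$ encoding the cohomology parameter $\hat\lambda$ by a new auxiliary Hamiltonian encoding the formal parameter $\lambda$ in the perturbation $H_\lambda = H + \sum_i \lambda_i K_i$. Concretely, I would first set up a generating function for the time-$k$ flow. Consider the Hamiltonian on $T^*M \times T^*\R^n$ defined by $\widehat H(x,p,\lambda,\lambda^*) := \chi(\lambda)\bigl(H(x,p) + \sum_i \lambda_i K_i(x,p)\bigr)$, where $\chi$ is a compactly supported cutoff equal to $1$ on a large compact set $A \ni 0$; the cutoff is harmless since all assertions are local near $\lambda = 0$. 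As in Section \ref{mainresult}, $\lambda^*$ is cyclic so $\lambda$ is constant along the flow, and the restriction to $\{\lambda = \lambda_0\}$ is exactly the flow of $H_{\lambda_0}$; moreover $d\lambda$ vanishes along orbits, so the Hamilton--Jacobi action of such an orbit equals the action of the corresponding $H_{\lambda_0}$-orbit. The time-$k$ image of the zero section then has a gfqi $\widehat S_k \fc M \times \R^n \times \R^{m_k} \to \R$, and setting $g_k(\lambda) := \tfrac{1}{k} l([M]\otimes 1_\lambda, \widehat S_k)$, the same argument as in the Proposition of Section \ref{mainresult} shows $g_k$ converges uniformly on compact sets to $E(\lambda) = \alpha_{H_\lambda}(0)$ (using that $g_k(\lambda) = \tfrac1k l_+(\phi^k_{H_\lambda})$ and the equi-Lipschitz bound from Oh's inequality, here with $\sum_i \lambda_i K_i$ in place of $K_{\hat\lambda}$).

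Next, given $\eta \in \partial_c E(0)$, I would reduce to the homological subdifferential exactly as in Corollary \ref{maincor}: by axiom (I), $\partial_c E(0) = co(\partial E(0))$ (indeed $\partial_c = co(\partial_a) \subset co(\partial)$), so it suffices to produce, for each $\eta' \in \partial E(0)$, an invariant measure $m'$ with $\int K_i\, dm' = \eta'_i$ for all $i$, and then take the barycentric combination, using linearity in $m$ of each functional $m \mapsto \int K_i\, dm$. For a fixed $\eta' \in \partial E(0)$, property (II) of the subdifferential gives $\lambda_k \to 0$ and $\eta'_k \in \partial g_k(\lambda_k)$ with $\eta'_k \to \eta'$. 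By Proposition \ref{inclusion}, each $\eta'_k$ is a convex combination $\eta'_k = \tfrac1k\sum_j r_{k,j}\, d_\lambda \widehat S_k(x_{k,j},\lambda_k,\xi_{k,j})$ with $\sum_j r_{k,j} = 1$ and $(x_{k,j},\xi_{k,j}) \in C_k(\lambda_k)$. The key computation is that the $\lambda$-derivative of the action along an orbit $\gamma_{k,j}$ is $\partial_{\lambda_i}\widehat S_k = \int_{\gamma_{k,j}} K_i(X_H)\, dt$, i.e. the $i$-th component is the integral of $K_i$ over the critical chord; this is the analogue of the identity $d_\lambda\widehat S = \{\int_\gamma \pi^*\hat\lambda(X_H)\}$ used in Theorem \ref{main_thm}, and follows by differentiating the normalized generating-function formula $\widehat S_k = \int_{\gamma} -\theta + H + \sum_i \lambda_i \int_\gamma K_i$ under the integral sign (the critical-point terms drop out by the envelope/first-variation argument).

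Then I would run the averaging machinery of Section 6. Attach to each critical chord $\gamma_{k,j} \fc [0,k] \to T^*M$ the probability measure $m_{k,j} = \tfrac1k (\gamma_{k,j})_\sharp \cL_{[0,k]}$, and form the convex combination $m_k := \sum_j r_{k,j}\, m_{k,j}$. Since the flow of $H_\lambda$ is geometrically bounded near $\lambda = 0$ (this needs to be assumed or noted; the chords stay in a fixed compact domain), Lemma \ref{adiabatic} and Proposition \ref{convergence} — applied with the family of flows $\phi^t_\lambda$ of $H_\lambda$ and the adiabatic parameters $\lambda_k \to 0$ — yield a subsequence of $m_k$ converging weakly to a measure $m$ that is $\phi^t_H$-invariant for all $t$. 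Finally, for each $i$, the functional $m \mapsto \int K_i\, dm$ is weak-$*$ continuous and linear, and along $\gamma_{k,j}$ one has $\int K_i\, dm_{k,j} = \tfrac1k\int_{\gamma_{k,j}} K_i(\gamma_{k,j}(t))\, dt = \tfrac1k (d_\lambda \widehat S_k)_i(x_{k,j},\lambda_k,\xi_{k,j})$, so $\int K_i\, dm_k = (\eta'_k)_i \to \eta'_i$, giving $\int K_i\, dm = \eta'_i$. Taking the barycenter over $\eta' \in \partial E(0)$ recovers the given $\eta \in \partial_c E(0)$.

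The main obstacle I expect is the differentiation-under-the-integral identity $\partial_{\lambda_i}\widehat S_k(x_{k,j},\lambda,\xi_{k,j}) = \int_{\gamma_{k,j}} K_i\, dt$ together with making sure $\gamma_{k,j}$ really is a genuine Hamiltonian chord of $H$ (not $H_{\lambda_k}$) in the limit: the chords live at parameter $\lambda_k \ne 0$, so one must check that the adiabatic limit in Lemma \ref{adiabatic} correctly converts "$\phi_{H_{\lambda_k}}$-chord" data into a "$\phi_H$-invariant" measure while the $\int K_i$-values pass to the limit unharmed — exactly the point where the smooth dependence of the flow on $\lambda$ (Cauchy--Lipschitz with parameters, as in Proposition \ref{convergence}) and the compactness of the orbit region are essential. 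A secondary subtlety is verifying geometric boundedness for $H_\lambda$ for small $\lambda$, which should follow from that of $H$ by a compactness/continuity argument on a neighborhood of $0$, but deserves a remark.
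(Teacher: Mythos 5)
Your proposal is correct and follows essentially the same route as the paper: the paper's (much terser) proof likewise treats $\lambda$ as an actual variable of an extended flow, extracts critical chords of $H_{\lambda_{\psi(k)}}$ with $\lambda_{\psi(k)}\to 0$ via Proposition \ref{inclusion} and axiom (II), identifies the $\lambda$-derivative of the generating function with $\int_\gamma K_i\,dt$, and passes to the limit of the barycentric measures using Lemma \ref{adiabatic} as in Theorem \ref{main_thm}. Your write-up simply makes explicit several points the paper leaves implicit (the reduction from $\partial_c$ to $\partial$ via barycenters, the uniform convergence of $g_k$ to $E$, and the geometric boundedness of $H_\lambda$ near $\lambda=0$), all in the spirit of the original argument.
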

 
\begin{prf}
 The proof is quite similar to the previous section. Clearly, we can consider the flow with $\lambda$ as actual variable. Thus, for each elements in the subdifferential of $\partial_c E(0)$, there exists a sequence of integer $\psi(k)$, $\lambda_{\psi(k)} \to 0$ and orbits $\gamma_{k,i} (t)= \phi_{H_{\lambda_{\psi(k)}}}^t(x_{k,i}), t\in[0,\psi(k)]$ such that:
 \[\sum\limits_{i=1}^n r_{i,k}\frac{1}{\psi(k)}\cA(\gamma_{k,i} (t))\to E(0) \ .\]
 
 \[\frac{1}{\psi(k)}\sum\limits_{i=1}^n r_{i,k} \int \sum_j  H_j\circ\gamma_{k,i}(t)dt \to \eta, k\to \infty \ .\]
 
 It remains to check that the limit of measure supported on the correct barycenter (given by the $r_i$) of the orbits $\gamma_{k,i}$ are $\phi_H^t$ invariant measures. We finish using lemma \ref{adiabatic} and mimic the proof of \ref{main_thm}.
 
\end{prf}

 We would to stress the fact that if $H_i$ are approximation of partition of unity, such a statement can be understood as a localization requirement of invariant measures.

\begin{rem}
 We can also mix the rotation vector problem and the localization of the measures.
\end{rem}
 
We finish with a conjecture:

\begin{conj}
 Let $H$ an autonomous Hamiltonian with geometrically bounded flow. Then, 
 \[E_H\fc C_c^0(TM)\to \R\]
 \[f\mapsto\alpha_{H+f}(0)-\alpha_H(0)\]
 \noindent being Lipschitz \cite{MVZ} possess a Clarke subdifferential contained in the dual of $C_c^0(T^*M)$ that we think as measures on $T^*M$. 
 For all $m\in \partial E_H(0)$, $m$ is an invariant measure of action $\alpha_H(0)$. 
\end{conj}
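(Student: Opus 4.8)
The plan is to mimic the proof of Theorem \ref{main_thm} and Theorem \ref{local}, but now working in the infinite-dimensional parameter space $C_c^0(T^*M)$ (or more precisely a suitable smooth dense subspace) instead of the finite-dimensional $\R^n$. First I would set up the parametrized Hamiltonian $H_f := H + f$ for $f$ ranging over a finite-dimensional affine slice spanned by smooth compactly supported functions, apply Proposition \ref{inclusion} to the associated generating function $\tilde S$ (with the finitely many $\lambda_i$'s as parameters), and obtain, exactly as in Theorem \ref{local}, that any element of $\partial_c E_H(0)$ restricted to that slice is a barycentric combination of terms of the form $\frac{1}{k}\int K_i \circ \gamma_{k,j}(t)\,dt$ for critical chords $\gamma_{k,j}$ of $H_{\lambda_k}$ with $\lambda_k \to 0$. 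The key new point is to show that, as we let the finite-dimensional slices exhaust $C_c^0(T^*M)$, these data assemble into a single measure: the natural candidate is the weak-$*$ limit (along a subsequence) of the averaged occupation measures $m_k := \frac1k \sum_j r_{k,j}\, (\gamma_{k,j})_\sharp \mathcal L_{[0,k]}$, which by Lemma \ref{adiabatic} and Proposition \ref{convergence} is $\phi_H^t$-invariant and has action $\alpha_H(0)$.

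Second, I would identify the weak-$*$ limit $m$ with the given functional $\mu \in \partial_c E_H(0) \subset C_c^0(T^*M)^*$. The point is that for each fixed test function $g \in C_c^0(T^*M)$ one has, by construction on the slice containing $g$, that $\langle \mu, g\rangle = \lim_k \frac1k\sum_j r_{k,j}\int_0^k g(\gamma_{k,j}(t))\,dt = \int_{T^*M} g\, dm$; since this holds for every $g$ and $m$ is a genuine (probability) measure by the compactness coming from "geometrically bounded flow," the Riesz representation theorem gives $\mu = m$ as an element of the dual. The Lipschitz property of $E_H$ (cited from \cite{MVZ}) is what guarantees $\partial_c E_H(0)$ is non-empty and that its elements are bounded linear functionals, hence are represented by finite signed measures; invariance and closedness then force $m$ to be a probability measure supported in the compact region dictated by the geometric boundedness hypothesis.

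The main obstacle, and the reason this remains a conjecture rather than a theorem, is the passage from finite-dimensional slices to the full infinite-dimensional $C_c^0(T^*M)$ in a way that produces one measure rather than a net of incompatible finite-dimensional answers. Concretely: for each finite-dimensional slice $V$ one gets invariant measures $m^V$ whose $K_i$-integrals match $\mu|_V$, but these $m^V$ need not be consistent as $V$ grows, and the diagonal/exhaustion argument requires a uniform tightness estimate on the occupation measures $m_k^V$ that is independent of $V$ — this is plausible given geometric boundedness but needs care because the perturbations $f$, while $C^0$-small, can have large derivatives and hence distort the flow. A secondary difficulty is that Proposition \ref{inclusion} and the deformation-lemma input of \cite{Wei} are stated for $C^0$ (or at least Lipschitz) dependence on finitely many real parameters; extending the generating-function machinery and the selector's differentiability properties to an honest function-space parameter would require either a careful reduction to finite dimensions at every stage or a genuinely new construction. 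I would therefore expect a full proof to hinge on (a) a uniform compactness/tightness lemma for the averaged chords under bounded-flow perturbations, and (b) a density argument showing that matching $\langle \mu, g\rangle$ on a dense set of smooth $g$ suffices — with step (a) being the real crux.
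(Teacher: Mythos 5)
The statement you are proving is stated in the paper as a \emph{conjecture}: the author provides no proof, so there is no argument of the paper to compare yours against. What you have written is a strategy outline, and you are candid that it is not a complete proof; on that level it is consistent with what the paper itself suggests (reduce to the finite-dimensional situation of Theorem \ref{local}, extract occupation measures along critical chords, pass to a weak-$*$ limit via Lemma \ref{adiabatic} and Proposition \ref{convergence}). Your identification of the crux --- a tightness estimate uniform over the finite-dimensional slices, and the extension of Proposition \ref{inclusion} beyond finitely many real parameters --- is the right diagnosis of why this is a conjecture and not a theorem.

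Two concrete points where your sketch is weaker than you present it. First, the very first reduction is not automatic: for a Lipschitz function $E_H$ on the Banach space $C_c^0(T^*M)$ and a finite-dimensional slice $V$, the generalized directional derivative computed in the full space dominates the one computed in $V$, so an element $\mu\in\partial_c E_H(0)$ restricted to $V$ need \emph{not} lie in $\partial_c\bigl(E_H|_V\bigr)(0)$; the inclusion you need in order to invoke Proposition \ref{inclusion} on each slice goes the wrong way in general, and in infinite dimensions one cannot fall back on Rademacher-type descriptions of $\partial_c$ by limits of derivatives. Second, the claim that ``invariance and closedness force $m$ to be a probability measure'' is not an argument: positivity of elements of $\partial_c E_H(0)$ should instead be derived from the monotonicity of $f\mapsto\alpha_{H+f}(0)$ (which follows from item (3) of Theorem \ref{MVZmain}), and the bound on total mass from the Lipschitz constant $1$ of $E_H$; normalization to mass exactly $1$ requires an additional argument since $C_c^0(T^*M)$ contains no nonzero constants when $T^*M$ is noncompact. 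Neither point invalidates your plan, but both belong on the list of genuine gaps alongside the tightness issue you already identified.
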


Because $H\mapsto\alpha_H(0)$ is an example of a symplectic partial quasi-state, we can extend this conjecture to all symplectic (partial)-quasi states.
 
 \begin{rem}
  This procedure can be performed not only for the value of $\alpha$ at $0$ but also for all other point of $H^1(M)$. 
 \end{rem}

 \section{Non autonomous case}\label{nonautonomous}

First, we need to generalize the notion of rotation vector for non-autonomous flow. 

\begin{definition}
 Let $m$ an invariant probability measure of $\phi_H$, $H\in C^2(T^*M\times\mathbb{S}^1)$. The rotation vector $\rho(m)$ is defined to be :
 
 \[ <\rho(m),a>:=\int_0^1\int_{T^*M} \pi^*\alpha(X_{H_t}(\phi^t(x))dm(x)dt=\int_{T^*M} (\int_{\gamma_x} \pi^*\alpha )dm(x)\]
 
\noindent for all $a\in H^1(M)$, $[\alpha]=a$ and where $\gamma_x$ stands for the trajectory $\{\phi^t(x),t\in [0,1]\}$.
\end{definition}

We define as well the action of an invariant measure $m$.

\begin{definition}
 Let $m$ an invariant probability measure of $\phi_H$, $H\in C^2(T^*M\times\mathbb{S}^1)$. The action $\mathcal{A}(m)$ is defined to be :
 
 \[\mathcal{A}(m):=-\int_0^1\int <\theta,X_H>(\phi^t(x))dm(x)dt+\int_0^1\int H_t(\phi^t(x)) dm(x) dt\]
\end{definition}

\begin{rem}
 \begin{itemize}
  \item If $H$ is autonomous then we recover our previous definitions.
  \item As stated in \cite{Polterovich}, the rotation vector $\rho(m)$ only depends on the time one map $\phi$ if H is compact supported. The counterexample $H(x,p)=p$ versus $H=0$ on $T^*M$ is instructive in the lights of ours theorems.
  \end{itemize}
\end{rem}

In the non-autonomous case, we cannot hope for the same degree of generality. Nevertheless, we can formulate statement about the existence of invariant measure for the time one map of periodic Hamiltonian.

\begin{thm}
Let $H\in C^2(T^*M\times\mathbb{S}^1)$ with geometrically bounded flow. Then, for all $\eta\in \partial \alpha_H(\hat\lambda)$, the exists $m$ a $\phi^1_H$ invariant measure such that $\cA(m)=\alpha_H(\hat\lambda)-\langle\eta, \hat\lambda\rangle$ and $\rho(m)=\eta$.
\end{thm}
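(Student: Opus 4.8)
The plan is to mimic the proof of Theorem \ref{main_thm} but replace the autonomous homogenization machinery with the time-one periodic setup, treating the non-autonomous Hamiltonian $H_t$ on $T^*M \times \mathbb{S}^1$ as defining a symplectic isotopy and using the reformulated generating function $\tilde S_k$ for the $k$-fold composition. First I would observe that all the structural ingredients carry over: the Hamiltonian $\tilde H(x,p,\lambda,\lambda^*) = \chi(\lambda)K_{\hat\lambda,t}(x,p)$ with $K_{\hat\lambda,t}(x,p) = H_t(x,p-\hat\lambda(x))$ still leaves $\lambda$ constant along the flow, the image of the zero section under the time-$k$ map (i.e. the $k$-th iterate of the time-one map, since $H$ is $1$-periodic) has a gfqi $\tilde S_k$ with the correct normalization $\tilde S_k(x,\lambda,\xi) = \int_{\gamma}(-\theta + H_t) - \int_\gamma \pi^*\hat\lambda$ along Hamiltonian chords from $graph(\hat\lambda)$ to $graph(\hat\lambda)$, and $f_k(\lambda) := l([M]\otimes 1_\lambda, \tilde S_k)/k$ still converges uniformly on compacts to $\alpha_H(\hat\lambda) = \mu_{\hat\lambda}(\phi_H^1)$ by the same equi-Lipschitz argument (the Oh bound on spectral invariants is insensitive to time-dependence).

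Next, given $\eta \in \partial\alpha_H(\hat\lambda)$, I would apply hypothesis (II) to get an extraction $\psi$ and $\lambda_{\psi(k)} \to \lambda$ with $\eta_{\psi(k)} \in \partial f_{\psi(k)}(\lambda_{\psi(k)})$ and $\eta_{\psi(k)} \to \eta$; relabel $\psi(k) = k$. By hypothesis (I) and Proposition \ref{inclusion}, each $\eta_k$ is a barycentric combination $\eta_k = \frac{1}{k}\sum_j r_{k,j}\, d_\lambda \tilde S_k(x_{k,j},\lambda_k,\xi_{k,j})$ with $(x_{k,j},\xi_{k,j}) \in C_k(\lambda_k)$, and the critical points correspond to Hamiltonian chords $\gamma_{k,j}$ of the time-$k$ flow starting and ending on $graph(\hat\lambda_k)$ with action $-\int_{\gamma_{k,j}}\pi^*\hat\lambda_k + f_k(\lambda_k)$, exactly the set $\Gamma_{k,\lambda_k}$. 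The $\lambda$-derivative of $\tilde S_k$ reads off the rotation component $d_\lambda \tilde S_k(x_{k,j},\lambda_k,\xi_{k,j}) = \big\{\int_{\gamma_{k,j}}\pi^*\beta_i(X_{H_t})\big\}_{i=1}^n$, which matches the non-autonomous definition of $\rho$ once we average over $[0,k]$ and divide by $k$.

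Then I would associate to $\gamma_{k,j}$ the measure $m_{k,j} = \frac{1}{k}(\gamma_{k,j})_\sharp \mathcal{L}_{[0,k]}$ on $T^*M$, form the barycenter $m_k = \sum_j r_{k,j} m_{k,j} \in co(\cM_{k,\lambda_k})$, and invoke Lemma \ref{adiabatic} — here is where the non-autonomous case forces a modification: I would work on the suspension $Y = T^*M \times \mathbb{S}^1$ (restricted to the compact domain guaranteed by the geometrically bounded flow hypothesis) with the autonomized flow, so that the family $\phi^t_{\lambda_k}$ genuinely depends smoothly on $\lambda$ and the $\lambda_k \to 0$ adiabatic limit produces a $\phi_H^t$-invariant measure on $Y$, whose projection / the relevant $t=1$ slice is a $\phi_H^1$-invariant measure $m$ on $T^*M$. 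Since action and rotation vector are linear in the measure and continuous under weak-$*$ convergence along these chords (the integrands $\langle\theta, X_{H_t}\rangle$, $H_t$, $\pi^*\alpha(X_{H_t})$ are uniformly continuous on the compact domain), we get $\rho(m) = \lim \eta_k = \eta$ and $\cA(m) = \lim(-\rho(m_k)(\hat\lambda) + \text{(averaged action)}) = \alpha_H(\hat\lambda) - \langle\eta,\hat\lambda\rangle$.

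The main obstacle I expect is the passage through the suspension in Lemma \ref{adiabatic}: in the non-autonomous setting the measures $m_{k,j}$ live on $T^*M$ but the flow is not a flow there, so one must either pass to $T^*M \times \mathbb{S}^1$ and carefully check that the adiabatic argument still yields a measure whose pushforward under each time-$t$ map is preserved (equivalently, that the limit measure disintegrates over $\mathbb{S}^1$ with slices intertwined by $\phi_H^{t}$), or argue directly with the discrete dynamical system generated by $\phi_H^1$ and a continuous-time interpolation. The $\frac{1}{k}$ error term $\frac{2T}{k}\|G\|_{C^0}$ in Lemma \ref{adiabatic} is unaffected, so the real subtlety is purely bookkeeping about which space the invariant measure naturally lives on; everything else is a transcription of the autonomous proof.
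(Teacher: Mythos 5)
Your proposal is correct and follows essentially the same route as the paper: run the autonomous argument on the suspension $T^*M\times\mathbb{S}^1$ with the flow $g_t\fc(x,s)\mapsto(\phi_{s+t}\phi_s^{-1}x,s+t)$, obtain a $g_t$-invariant limit measure via Lemma \ref{adiabatic}, and descend to a $\phi^1_H$-invariant measure on $T^*M$. The descent step you flag as the main obstacle --- that the limit measure disintegrates over $\mathbb{S}^1$ with slices intertwined by $\phi_H^t$ --- is exactly Polterovich's trick, which the paper isolates and proves in the appendix.
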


\begin{prf}
 The proof follows essentially the same line than \ref{main_thm}. The construction is similar as limit of sum of measures supported on selected Hamiltonian paths. The method of lemma \ref{adiabatic} used for the autonomous case gives us a measure $\nu$ on $T^*M\times \mathbb{S}^1$ which is invariant by $g_t$:
 
 \[g_t\fc(x,s)\mapsto (\phi_{t+s}\phi_s^{-1},s+t) \ .\]
 
 Moreover, we have 
 
 \[\int -\langle\theta,X_{H_t}\rangle + H_t d\nu=\alpha_H(\hat\lambda)-\int \pi^*\hat\lambda(X_H)d\nu\] 
 
 \[ \int \langle\pi^*\zeta,X_H\rangle d\nu=<\zeta,\eta>\ , \forall \zeta \in H^1(M)\ .\]
 
 According to Polterovich's trick explained in appendix, there exists a $\phi$ invariant measure $m$ on $T^*M$ such that with the previous equality:

 \[\rho(m)=\eta\]
 \[\cA (m)=\alpha_H(\hat\lambda)-\rho(m)(\hat\lambda)\ .\]
 \end{prf}

\appendix
\section{Polterovich's trick from autonomous to non autonomous.}

In his recent paper Polterovich \cite{Polterovich} gives a method to correctly get an invariant probability measure of a time one flow of an non autonomous Hamiltonian on a symplectic manifold $X$ from invariant measure on $X\times\mathbb{S}^1$ satisfying good relation according to integration. More precisely, here is an adaptation to our context:

\begin{prop}\cite{Polterovich}(step 6)
   Let $g_t: T^*M\times \mathbb{S}^1\to T^*M\times \mathbb{S}^1, (x,s)\mapsto (\phi_{s+t}\phi_s^{-1}x,s+t)$ and suppose we have constructed a $g_t$ invariant measure $\nu$ on $T^*M\times \mathbb{S}^1$. Then there exists $m$, a $\phi$ invariant probability measure on $T^*M$ such that:
   
   \[\int_{T^*M\times\mathbb{S}^1} G(x,s) d\nu=\int_0^1 \int_{T^* M}G(\phi_s(x),s)dm(x)ds, \forall G\in C_c^0(T^*M\times\mathbb{S}^1)\ .\]
 
\end{prop}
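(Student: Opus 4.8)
The plan is to obtain $m$ as the suitably averaged time-zero slice of the disintegration of $\nu$ over the circle, and to read both claimed properties off the cocycle relation that $g_t$-invariance forces on the fibers. The one point requiring care is measure-theoretic: the slice ``$\nu_0$'' is only defined up to a null set and the pullback $x\mapsto\phi_s^{-1}x$ is \emph{not} periodic in $s$, so I cannot naively evaluate at $s=0$; averaging over $s$ together with the time-periodicity of $H$ is what circumvents this.

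First I would analyse the projection $\pr\fc T^*M\times\mathbb{S}^1\to\mathbb{S}^1$. Since $g_t$ covers the rotation $s\mapsto s+t$, the measure $\pr_\sharp\nu$ is invariant under every rotation and hence equals Lebesgue measure $ds$; by the disintegration theorem (valid as $T^*M\times\mathbb{S}^1$ is Polish) there is a family of probability measures $\{\nu_s\}_{s\in\mathbb{S}^1}$ on $T^*M$ with $\nu=\int_0^1\nu_s\,ds$. Writing $\Phi_{s\to u}:=\phi_u\circ\phi_s^{-1}$ for the two-parameter flow, so that $g_t(x,s)=(\Phi_{s\to s+t}x,s+t)$, the time-periodicity of $H$ gives $\phi_{s+1}=\phi_s\circ\phi_1$ and hence $\Phi_{s+1\to u+1}=\Phi_{s\to u}$; this is exactly what makes $g_t$ descend to the circle. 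Pushing the disintegration forward by $g_t$ and comparing the fibers over a fixed base point, essential uniqueness of disintegrations yields, for each $t$ and a.e.\ $s$,
\[\nu_{s+t}=(\Phi_{s\to s+t})_\sharp\nu_s\ .\]
A Fubini argument on $\mathbb{S}^1\times\mathbb{S}^1$ (using that, for a countable dense family in $C_c^0(T^*M)$, the condition is measurable in $(s,u)$) upgrades this to: the set of pairs $(s,u)$ with $(\Phi_{s\to u})_\sharp\nu_s=\nu_u$ has full measure.

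I would then set
\[m:=\int_0^1(\phi_s^{-1})_\sharp\nu_s\,ds=\int_0^1(\Phi_{s\to 0})_\sharp\nu_s\,ds\ ,\]
an honest probability measure as an average of probability measures. For the integral identity I compute $(\phi_s)_\sharp m=\int_0^1(\Phi_{u\to s})_\sharp\nu_u\,du$; by the full-measure cocycle relation the integrand equals $\nu_s$ for a.e.\ $u$, so $(\phi_s)_\sharp m=\nu_s$ for a.e.\ $s$. Substituting this into the disintegration of $\nu$ gives precisely $\int G\,d\nu=\int_0^1\int G(\phi_s x,s)\,dm(x)\,ds$ for all $G\in C_c^0(T^*M\times\mathbb{S}^1)$. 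For $\phi$-invariance I compute $(\phi_1)_\sharp m=\int_0^1(\Phi_{s\to 1})_\sharp\nu_s\,ds$ and use the periodicity $\Phi_{s\to 1}=\Phi_{s-1\to 0}$ (equivalently $\phi_1\phi_s^{-1}=\phi_{s-1}^{-1}$) together with $\nu_s=\nu_{s-1}$ on the circle; the substitution $s\mapsto s-1$, legitimate since the integrand is $1$-periodic, returns the integral defining $m$, so $(\phi_1)_\sharp m=m$.

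The hard part is entirely in the bookkeeping of the a.e.\ quantifiers: extracting a \emph{single} genuinely $\phi$-invariant measure from fiber data that is only specified up to null sets, in a setting where the natural ``normalizing'' map $x\mapsto\phi_s^{-1}x$ fails to descend to $\mathbb{S}^1$. Both difficulties dissolve once one averages over $s$ and invokes $\phi_{s+1}=\phi_s\phi_1$, which converts the a.e.\ cocycle identity into the two exact measure equalities above; everything else is the disintegration theorem and Fubini.
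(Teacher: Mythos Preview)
Your approach is correct in spirit and reaches the same measure as the paper, but it is organized quite differently, and there is one step where your argument as written is circular.

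\medskip
\noindent\textbf{Comparison with the paper.} The paper does not disintegrate. It lifts $g_t$ to $\tilde g_t$ on $T^*M\times\R$, observes the conjugation identity $\tilde g_t = A\,R_t\,A^{-1}$ with $A(x,s)=(\phi_s x,s)$ and $R_t(x,s)=(x,s+t)$, and concludes that $A^{-1}_\sharp\tilde\nu$ is invariant under all $R_t$, hence necessarily of the form $m\otimes ds$. The $\phi$-invariance of $m$ then drops out of the separate $R_1$-invariance of the lift (equivalently, $B(x,s)=(\phi^{-1}x,s+1)$ fixes $m\otimes ds$). This packages both the integral identity and the $\phi$-invariance into one algebraic move and sidesteps all a.e.\ bookkeeping. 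Your disintegration route is a legitimate alternative and is essentially the same picture read fiberwise: the relation $(\phi_s^{-1})_\sharp\nu_s=m$ a.e.\ is exactly $A^{-1}_\sharp\nu=m\otimes ds$.

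\medskip
\noindent\textbf{The gap.} Your $\phi_1$-invariance step asserts that $u\mapsto(\phi_u^{-1})_\sharp\nu_u$ is $1$-periodic and uses this to pass from $\int_{-1}^0$ to $\int_0^1$. But $(\phi_{u+1}^{-1})_\sharp\nu_{u+1}=(\phi_1^{-1})_\sharp\bigl[(\phi_u^{-1})_\sharp\nu_u\bigr]$, so $1$-periodicity of the integrand is \emph{equivalent} to $\phi_1$-invariance of $(\phi_u^{-1})_\sharp\nu_u$; since you have already shown $(\phi_u^{-1})_\sharp\nu_u=m$ for a.e.\ $u\in[0,1)$, you are assuming $(\phi_1)_\sharp m=m$ to prove it. The Fubini upgrade to a full-measure set of pairs in $\mathbb{S}^1\times\mathbb{S}^1$ does not rescue this, because the relevant pair $(u,u+1)$ lies on the diagonal after reduction mod $1$, a null set.

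\medskip
\noindent\textbf{The fix.} Go back to your earlier, stronger statement ``for each $t$ and a.e.\ $s$, $\nu_{s+t}=(\Phi_{s\to s+t})_\sharp\nu_s$'' and apply it at $t=1$: since $\Phi_{s\to s+1}=\phi_s\phi_1\phi_s^{-1}$ and $\nu_{s+1}=\nu_s$, this reads $(\phi_s\phi_1\phi_s^{-1})_\sharp\nu_s=\nu_s$, i.e.\ $(\phi_1)_\sharp\bigl[(\phi_s^{-1})_\sharp\nu_s\bigr]=(\phi_s^{-1})_\sharp\nu_s$ for a.e.\ $s$. Combined with $(\phi_s^{-1})_\sharp\nu_s=m$ a.e., this gives $(\phi_1)_\sharp m=m$ directly. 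This is precisely the fiberwise translation of the paper's $R_1$-invariance argument; once you insert it, your proof is complete.
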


\begin{proof}
 Define $A\fc (x,s)\mapsto (\phi_s(x),s)$, $B\fc (x,s) \mapsto (\phi^{-1}(x),s+1)$ and $R_t \fc (x,s)\mapsto (x,s+t)$.
 It is possible to lift $g_t$ from $T^*M\times\mathbb{S}^1$ to $T^*M\times\R$, we call this lift $\tilde g_t$. We have the following one to one correspondence:
 
 \[\left\{ \sigma,\text{probability measure on } T^*M\times\mathbb{S}^1 \right\} \longleftrightarrow \left\{ \tilde \sigma,R_1 \text{inv.  on } T^*M\times\R, \tilde \sigma(T^*M \times [0,1[)=1 \right\}\ . \]
 
\noindent A measure $\sigma$ is $g_t$ invariant if and only if $\tilde \sigma$ is $\tilde g_t$ invariant.
 
We can rewrite $\tilde g_t = AR_tA^{-1}$ and deduce that if $\tilde \sigma$ is $\tilde g_t$ invariant, then : $\tilde \sigma=A_\sharp \bar{m}$ with $\bar{m}$ a $R_t$ invariant measure which is necessarily of the form $\bar{m}=m\otimes ds$.
Moreover, $\tilde \sigma$ is $R_1$ invariant if and only if $A_\sharp \bar{m}$ is $R_1$-invariant if and only if ${A_\sharp}^{-1} {R_1}_\sharp A_\sharp \bar{m}= \bar{m}$. Thus,

\[B_\sharp(m\otimes ds)= m\otimes ds\ .\]

\noindent Then, $m$ is invariant by $\phi$. We conclude by the construction that for all $G\in C_c^0(T^*M\times \mathbb{S}^1):$

\[ \int_{T^*M\times\mathbb{S}^1} G(x,s) d\sigma=\int_0^1 \int_{T^* M}G(\phi_s(x),s)dm(x)ds, \forall G\in C_0^0(T^*M\times\mathbb{S}^1) \ .\]

\end{proof}

\end{document}